\newtheorem{theorem}{Theorem}[section]
\newtheorem{remark}[theorem]{Remark}
\newtheorem{lemma}[theorem]{Lemma}
\newtheorem{proposition}[theorem]{Proposition}
\newtheorem{definition}[theorem]{Definition}
\numberwithin{equation}{section} \theoremstyle{definition}
\def\span{\operatorname{span}}
\newcommand{\ZZ}{{\mathbb Z}}
\newcommand{\C}{\ensuremath{\mathbb C}\xspace}
\renewcommand{\a}{\ensuremath{\alpha}}
\renewcommand{\l}{\ensuremath{\lambda}}
\newcommand{\h}{\ensuremath{\mathfrak{H}}}
\newcommand{\Z}{\ensuremath{\mathbb{Z}}\xspace}
\newcommand{\W}{\ensuremath{W}\xspace}
\renewcommand{\phi}{\varphi}
\renewcommand{\leq}{\leqslant}
\renewcommand{\geq}{\geqslant}
\newcommand{\K}{\ensuremath{\mathcal{\K}}\xspace}
\def\mh{\mathfrak{h}}
\def\sl{\mathfrak{sl}}
\def\gl{\mathfrak{gl}}
\def\ot{\otimes}
\def\l{\lambda}
\def\p{\partial}
\def\d{\delta}
\def\S{\mathbb{S}}
\def\K{\mathcal{K}}
\def\F{\mathcal{F}}
\def\L{\mathcal{L}}
\def\tL{\widetilde{\mathcal{L}}}
\def\A{\mathcal{A}}
\def\span{\text{span}}
\def\End{\text{End}}
\def\ker{\text{ker}}
\def\bx{\boxtimes}
\def\wg{\wedge}
\begin{document}
\title[Simple $\S_n$-modules]{Simple modules over the Lie algebras of divergence
zero vector fields on a torus}
\author[B. F. Dubsky, X. Guo, Y. Yao  and K. Zhao]{Brendan Frisk Dubsky, Xianqian Guo, Yufeng Yao  and Kaiming Zhao}
\maketitle

\begin{abstract} \vskip -1cm  Let $n\ge2$ be an integer, $\mathcal{K}_n$ the Weyl algebra over the Laurent polynomial algebra $A_n=\mathbb{C} [x_1^{\pm1}, x_2^{\pm1}, ..., x_n^{\pm1}]$, and $\mathbb{S}_n$  the Lie algebra of  divergence zero vector fields on an $n$-dimensional torus. For any $\mathfrak{sl}_n$-module $V$ and any module $P$ over $\mathcal{K}_n$, we define an $\mathbb{S}_n$-module structure on the tensor product $P\otimes V$. In this paper, necessary and sufficient  conditions for the $\mathbb{S}_n$-modules $P\otimes V$ to be simple are given, and an isomorphism criterion for nonminuscule $\mathbb{S}_n$-modules is provided. More precisely, all nonminuscule $\mathbb{S}_n$-modules are simple, and pairwise nonisomorphic. For minuscule $\mathbb{S}_n$-modules,  minimal and maximal submodules are concretely constructed.

\end{abstract}
\vskip 10pt \noindent {\em Keywords:}  Lie algebra of divergence zero vector fields, Weyl algebra,   irreducible module, (non)minuscule module, Density Theorem.
\vskip 5pt
\noindent
{\em 2010  Math. Subj. Class.:}
17B10, 17B65, 17B66
\vskip 10pt

\section{Introduction}

We denote by $\mathbb{Z}$, $\mathbb{Z}_+$, $\mathbb{N}$ and
$\mathbb{C}$ the sets of  all integers, nonnegative integers,
positive integers and complex numbers, respectively.  All algebras and vector spaces are assumed to be over $\C$.
For any vector space $V$, we denote by $V^*$ the dual space of $V$.
Let $n\ge2$ be a fixed integer throughout this paper.

Representation theory of Lie algebras is a rich  topic attracting extensive attention from many mathematicians. Classification of simple modules is an important step in the study of a module category
over an algebra.
The classification of simple weight modules for several classes of Lie algebras has been achieved over many years of many mathematicians' endeavor. Here we only mention a few such achievements related to the study of the present paper.
Finite-dimensional simple modules for  finite-dimensional semisimple
Lie algebras were classified by Cartan in 1913, see \cite{Ca}. The classification of simple Harish-Chandra modules over the Virasoro algebra was completed by Mathieu in 1992, see [M1].
 The classification of  simple Harish-Chandra
modules over finite dimensional semisimple Lie algebras    was obtained in 2000,
see \cite{M2}.  Billig and Futorny \cite{BF}    classified  simple Harish-Chandra  modules over Witt algebras $W_n$. Besides $\sl_2$ (and some of its deformations), all simple weight modules were classified only for the aging algebra \cite{LMZ},
 the Schr\"odinger algebra \cite{BL2}, and
 the Euclidean algebra \cite{BL1}.

Over the last two decades, the weight representation theory of Witt algebras was  extensively
studied by many mathematicians and physicists; see for example \cite{B, E1, E2, BMZ, GLZ,L3, L4, L5,LiZ,LZ, LLZ,
MZ2,Z}. In 1986,  Shen defined a class of modules $F^\alpha_b(V)$
over the Witt algebra $W_n$ for  $\a\in\C^n$, $b\in\C$,
and a simple  module $ V$ over the special linear Lie algebra
$\sl_n$, see \cite{Sh}, which were also given by Larsson in 1992,
see \cite{L3}. In 1996, Eswara Rao determined the  conditions for   these modules to be irreducible when $V$
is finite dimensional, see \cite{E1, GZ}. Very recently,  Billig and Futorny
\cite{BF}   proved that  simple Harish-Chandra
$W_n$-modules     are modules of the highest weight type or simple quotient
  modules of $F^\alpha_b(V)$. These weight modules over algebraically closed fields of positive characteristic were studied by Nakano \cite{N}.

In the paper \cite{LLZ}, a big class of $W_n$-modules (including weight and nonweight modules)  were constructed and their irreducibility and submodules were determined. More precisely, let $\C^n$ be the natural $n$-dimensional representation of $\sl_n$ and let $V(\delta_k)$ be its
$k$th exterior power, where $k = 0,\cdots,n-1$. It was shown that when $M$ is a weight module over $\sl_n$, $\mathcal{F}(P, M)$ is a simple module over $W_n$ if and only if  $M\not\cong V(\delta_k)$ for any $k\in \{0, 1,\cdots, n\}$ with one more natural condition on $P$ for $k=0$ or $n$. Using these irreducible modules, the paper \cite{GLLZ} completely detemined all
irreducible $W_n$-modules that are finitely generated over the Cartan subalgebra. In the present paper, we will regard the modules in \cite{GLLZ} as  modules over the Lie algebra $\mathbb{S}_n$ of  divergence zero vector fields on an n-dimensional  torus, determine the conditions for them to be irreducible and isomorphic, and also construct minimal and maximal submodules. Besides some irreducible weight $\S_n$-modules which were studied in \cite{T, LGW}, there is a class of nonweight $\S_n$-modules constructed in \cite{Zh}, which are free $U(\mh)$-modules of rank one.

The present paper is organized as follows. In Section 2, we recall related definitions for fundamental representations of $\sl_n$,  the Witt algebra $W_n$, the  algebra $\S_n$, and the Weyl algebra $\K_n$, and furthermore construct our $\S_n$-module $\F( P, V)$ for a $\K_n$-module $P$ and an $\sl_n$-module $V$, also describe some of its properties. In Section 3,     we prove that all nonminuscule $\S_n$-modules are simple  (Theorem \ref{irre}) and pairwise nonisomorphic (Proposition \ref{isomorphism criterion}). Section 4 is devoted to the study of minuscule $\S_n$-modules. Minimal and maximal submodules of minuscule $\S_n$-modules are concretely constructed (Theorem \ref{LLn}).

Our results recover those in \cite{T, LGW} where only  $P=A_n$ was considered. However, our method is very different from the one used in those papers.

\section{$\S_n$-modules from $\K_n$-modules and $\sl_n$-modules}

\subsection{Simple $\sl_n$-modules}

As usual,   $\ZZ^{n}$
denotes the direct sum of $n$ copies of the additive group $\ZZ$, and we consider it as the additive group of all column vectors with integer entries (and similar notation is used for other direct sums). For
any $a=(a_1,\cdots, a_n)^T \in \Z_+^n$ and $m=(m_1,\cdots,m_n)^T
\in\C^n$, we denote $m^{a}=m_1^{a_1}m_2^{a_2}\cdots m_n^{a_n}$,
where $T$ means taking the transpose of the matrix.
Let $\gl_n$ be the Lie algebra of all $n \times n$ complex matrices, and $\sl_n$ be the
subalgebra of $\gl_n$ consisting of all traceless matrices.  For $1
\leq i, j \leq n$, we use $E_{ij}$ to denote the matrix with $1$
at the $(i, j)$ entry and zeros elsewhere. We know that
$$\gl_n=\sum_{1\leq i, j\leq
n}\C E_{i,j}.$$

Let $\h=\span\{h_{i}\,|\,1\le i\le n-1\}$ where
$h_i=E_{ii}-E_{i+1,i+1}$.
Let $\Lambda^+=\{\l\in\h^*\,|\,\l(h_i)\in\Z_+ \text{ for } i=1,2,...,n-1\}$ be the set of dominant weights with respect to $\h$. For any
$\psi\in \Lambda^+$,  let $V(\psi)$ be  the simple $\sl_n$-module with
highest weight $\psi$.

We fix the vector space $\mathbb{C}^n$ of $n\times 1$ matrices.
Denote its standard basis by $\{e_1,e_2,...,e_n\}$. Let
$(\,\cdot\,|\, \cdot\, )$ be the standard symmetric bilinear form  on $\mathbb{C}^n$
such that $(u | v)=u^Tv\in\mathbb{C}$.

Define the fundamental weights $\delta_i\in\h^*$ by
$\delta_i(h_j)=\delta_{i,j}$ for all $i,j=1,2,..., n-1$. For convenience, we also set $\d_0=\d_n=0$.
It is well known that the module $V(\delta_1)$ can be realized as the
natural representation of $\gl_n$ on $\mathbb{C}^n$ (the matrix
product), which we can write as $E_{ji}e_l=\delta_{li}e_j$.  In
particular,
\begin{equation}(ru^T)v=(u|v)r,\,\,\forall\,\, u,v,r\in \mathbb{C}^n.\end{equation}
The exterior
product $\bigwedge^k(\mathbb{C}^n)=\mathbb{C}^n\wedge\cdots\wedge
\mathbb{C}^n\ \ (k\ \mbox{times})$ is a $\mathfrak{gl}_n$-module
with the action given by $$X(v_1\wedge\cdots\wedge
v_k)=\sum\limits_{i=1}^k v_1\wedge\cdots v_{i-1}\wedge
Xv_i\cdots\wedge v_k, \,\,\forall \,\, v_i\in  \mathbb{C}^n,  X\in \gl_n.$$
We set  $\bigwedge^0(\mathbb{C}^n)=\C$ and
$v\wedge a=av$ for any $v\in\C^n, a\in\C$.
The following
$\gl_n$-module isomorphism is well known:
\begin{equation}\label{dk}{\bigwedge}^k(\mathbb{C}^n)\cong V(\delta_k),\,\forall\,\, 0\leq k\leq n.\end{equation}

\subsection{The Lie algebras $W_n$ and $\S_n$}
We denote by  $ W_n$ the derivation Lie algebra of the
Laurent polynomial algebra $A_n=\C[x_1^{\pm1}, \cdots,x_n^{\pm1}]$ which is also called  the Lie algebra of polynomial
vector fields on an n-dimensional torus.
Set $\partial_i=x_i\frac{\partial}{\partial x_i}$ for $i=1,2,\dots,n$ and
$x^r=x_1^{r_1}x_2^{r_2}\cdots x_n^{r_n}$ for $r=(r_1,r_2,\cdots, r_n)^T\in\mathbb{Z}^n$. Then $$W_n={\text{span}}\{x^r\partial_i\mid r\in\mathbb{Z}^n,
1\leq i\leq n\}.$$

For $u=(u_1,\cdots, u_n)^T \in \mathbb {C}^n$ and $r\in \mathbb{Z}^n$, we denote
$$D(u,r)=x^r\sum_{i=1}^nu_i\partial_i\in\W_n.$$ Then we have the Lie bracket
$$[D(u,r),D(v,s)]=D(w,r+s),\ \forall\ u,v\in \mathbb {C}^n, r,s\in \mathbb {Z}^n,$$
where $w=(u | s)v-(v | r)u$. Note that for any $u,v,\xi,\eta\in
\mathbb{C}^n$, both $uv^T$ and $\xi\eta^T$ are $n\times n$ matrices, and
\begin{equation*}(uv^T)(\xi\eta^T)=(v|\xi)u\eta^T.\end{equation*}
We know that $\mh=\span\{\partial_1, \partial_2, ... , \partial_n\}$ is the Cartan
subalgebra of $\W_n$.

Now we recall the Lie subalgebra $\S_n$ of $W_n$:
$$\S_n={\text{span}}\{D(u,r)\mid u\in\C^n, r\in\Z^n {\text{ with }} (u|r)=0\}.$$
This  Lie subalgebra $\S_n$ is called  the Lie algebra of  divergence
zero vector fields on an n-dimensional torus. We see that $\mh$ is its Cartan subalgebra.

\subsection{$\S_n$-modules}
Let $V$ be an $\sl_n$-module and $P$ be a  module over the Weyl algebra $\K_n$.
Let $\F(P,V)=P\otimes V$. We define the  actions of $\S_n$ and $A_n$ on $\mathcal{F}(P, V)$ as follows:
\begin{equation}\label{2.1}
 D(u,r)(z\otimes y)= D(u,r)z\otimes y+ x^rz \otimes(ru^T) y,
\end{equation}
\begin{equation}\label{2.4}
x^r(z\otimes y)=x^rz\otimes y ,
\end{equation}
where $u\in\mathbb{C}^n$, $r\in\mathbb{Z}^n$ with $(r|u)=0$ and $z\in P, y\in V$. Note that $ru^T\in\sl_n$ in \eqref{2.1}. Similarly to \cite{T}, we define the following concept.

\begin{definition}
Let $V$ be a simple $\sl_n$-module and $P$ be a simple $\K_n$-module. The $\S_n$-module  $\F(P,V)=P\otimes V$ is called minuscule if $V\cong V(\delta_k)$ for some $k$ with $0\leq k\leq n-1$. Otherwise, it is called nonminuscule.
\end{definition}

It is easy to see that $\F(P, V)$ is a weight $\S_n$-module if and only if $P$ is a weight $\S_n$-module.
Sometimes we may rewrite (\ref{2.1}) as
\begin{equation}\label{2.5}
(x^r\partial_j)(z\otimes y)= (x^r\partial_j)z\otimes y+\sum_{i=1}^nr_i x^r z \otimes(E_{ij} y),\ {\text if}\ r_j=0.
\end{equation}
Note that the elements $x^r\partial_j, r\in\Z^n, r_j=0$, do not span the whole algebra $\S_n$. These $\S_n$-modules  $\F(P, V)$ become the $W_n$-modules in \cite{GLLZ} if one makes the $\sl_n$-module $V$ into a $\gl_n$-module with any scalar action of the identity matrix.

The following facts should also be noted when we do computations on the module $\F(P,V)$:
we have $x^sD(u,r)p=D(u,r+s)p$ and
\begin{equation}\label{DDp}
D(v,s)D(u,r)p=D(v,r+s)D(u,0)p+(v|r)D(u,r+s)p;
\end{equation}
while we do not have $(ru^T)(sv^T)w=(s|u)(rv^T)w$,
for $u,v\in\C^n, r,s\in\Z^n$ and $p\in P, w\in V$.

\section{Nonminuscule $\S_n$-modules $\F(P,V)$}

In this section we prove that all nonminuscule  $\S_n$-modules $\F(P,V)$ are simple. First we recall a result from \cite{LZ}.

\begin{lemma}\label{sln module}
Let $L$ be an irreducible $\sl_n$-module (not necessarily a weight module).
\begin{itemize}
\item [(a)] For any $i, j$ with $1\leq i\neq j\leq n$, $E_{ij}$ acts injectively or
   locally nilpotently on $L$.
\item [(b)] The module $L$ is finite-dimensional if and only if $E_{ij}$ acts locally
   nilpotently on $L$ for any $i, j$ with $1\leq i\neq j\leq n$.
\end{itemize}
\end{lemma}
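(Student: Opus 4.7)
For part (a), fix $i\ne j$ and set
$$N_{ij} = \{ v \in L : E_{ij}^k v = 0 \text{ for some } k \in \N \}.$$
The plan is to show $N_{ij}$ is an $\sl_n$-submodule, so that irreducibility forces $N_{ij}=0$ (injectivity) or $N_{ij}=L$ (local nilpotency). This combines two standard ingredients: $\ad E_{ij}$ is nilpotent on $\sl_n$ (with nilpotency index at most $3$), together with the identity
$$E_{ij}^{N} x = \sum_{s=0}^{N}\binom{N}{s}(\ad E_{ij})^{s}(x)\, E_{ij}^{N-s}, \qquad x\in\sl_n,$$
in $U(\sl_n)$. For $v\in N_{ij}$ with $E_{ij}^{m}v=0$ and $N\ge m+3$, every summand vanishes (either by the adjoint factor for $s\ge 3$, or by $E_{ij}^{N-s}v=0$ for $s\le 2$), so $xv\in N_{ij}$.

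For part (b), the forward direction is classical since every finite-dimensional simple $\sl_n$-module is a highest weight module on which each root vector is nilpotent. For the converse, assume each $E_{ij}$ ($i\ne j$) acts locally nilpotently. For each simple root, both root vectors of the $\sl_2$-triple $\{E_{i,i+1},h_i,E_{i+1,i}\}$ act locally nilpotently, so by standard integrable $\sl_2$-theory $L$ decomposes as a direct sum of finite-dimensional modules over this $\sl_2$; consequently $h_i$ is diagonalizable with integer eigenvalues. Since the $h_i$ commute, $L=\bigoplus_\mu L_\mu$ is a weight module with weights in the integral weight lattice.

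The crucial step is then to produce a highest weight vector. Starting from any nonzero weight vector, I would process the positive roots one at a time in order of \emph{decreasing} height, at each stage replacing the current vector $w$ by $E_\alpha^{k_\alpha} w$, where $k_\alpha$ is the maximal exponent giving a nonzero result (well-defined by local nilpotency). The commutator identity $E_\alpha E_\beta^k = E_\beta^k E_\alpha + k E_\beta^{k-1}[E_\alpha,E_\beta]$, combined with $[E_\alpha,E_\beta]\in\CC E_{\alpha+\beta}$, allows one to verify inductively that the invariant ``current vector is killed by $E_\alpha$ for every already-processed $\alpha$'' is preserved: the only new commutator term that could spoil it involves $E_{\alpha+\beta}$, but $\alpha+\beta$ has height strictly greater than both $\alpha$ and $\beta$, so it was processed earlier and therefore already annihilates the vector. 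The output is a nonzero weight vector $v_0$ of some integral weight $\l$ killed by all $E_{ij}$ with $i<j$. Local nilpotency of $E_{i+1,i}$ on $v_0$ then forces $\l(h_i)\in\Z_+$ by the standard $\sl_2$ argument, so $\l$ is dominant integral; the cyclic module $U(\sl_n)v_0$ is then a highest weight module whose unique simple quotient is the finite-dimensional $V(\l)$, and irreducibility of $L$ gives $L\cong V(\l)$.

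The main obstacle is this highest-weight-vector construction: the bookkeeping to guarantee that successive ascents do not destroy previously achieved annihilations is the one step requiring genuine care, everything else reducing to direct applications of $\sl_2$- and $\sl_n$-representation theory.
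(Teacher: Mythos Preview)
The paper does not actually prove this lemma: it is introduced with ``First we recall a result from \cite{LZ}'' and stated without argument. So there is no proof in the paper to compare against; you have supplied one where the authors simply cite the literature.

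Your argument is correct. A few remarks on points that deserve to be made explicit if you write this out in full:

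\textbf{Part (a).} Your proof is the standard one and is fine as written. (The bound $N\ge m+2$ already suffices, since $(\ad E_{ij})^s=0$ for $s\ge 3$; your $N\ge m+3$ is harmless overkill.)

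\textbf{Part (b), the commutator identity.} Your formula $E_\alpha E_\beta^{k}=E_\beta^{k}E_\alpha+kE_\beta^{k-1}[E_\alpha,E_\beta]$ is not the general Leibniz expansion: it presupposes $(\ad E_\beta)^2 E_\alpha=0$. This holds in $\sl_n$ because in type $A$ every root string has length at most $2$ (equivalently, $\alpha+2\beta$ is never a root when $\alpha,\beta,\alpha+\beta$ are), so your argument is valid here, but you should flag that this step is type-$A$ specific.

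\textbf{Part (b), the $\sl_2$ input.} The assertion that local nilpotency of both $E_{i,i+1}$ and $E_{i+1,i}$ forces $L$ to split as a direct sum of finite-dimensional $\sl_2$-modules (hence $h_i$ acts semisimply with integer eigenvalues) is correct and standard, but it is not a one-liner; it is the statement that integrable $\sl_2$-modules are completely reducible (see e.g.\ Kac, \emph{Infinite-dimensional Lie algebras}, \S3.8). It is reasonable to cite this, as you do, but be aware that it is the one genuinely nontrivial external input.

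\textbf{Part (b), the highest-weight construction.} Your decreasing-height processing order is exactly right, and the verification that annihilation by earlier $E_\alpha$ survives is sound: the only potentially dangerous term is $E_\beta^{k-1}[E_\alpha,E_\beta]w$, and $[E_\alpha,E_\beta]\in\CC E_{\alpha+\beta}$ with $\mathrm{ht}(\alpha+\beta)>\mathrm{ht}(\alpha)\ge\mathrm{ht}(\beta)$, so $E_{\alpha+\beta}$ was processed strictly before $\beta$ and already kills $w$. This is the delicate bookkeeping you identified, and it goes through cleanly.
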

 The following result will be widely used.
\begin{theorem}\label{density}
Let $M$ be an irreducible module over a (unital) associative algebra $\A$ which has countable basis.
Then for any linearly independent elements $v_1,\cdots,v_m\in M$ and any elements $u_1,\cdots,u_m\in M$,
there exists some $a\in \A$ such that $av_i=u_i$ for $i=1,\cdots,m$.
\end{theorem}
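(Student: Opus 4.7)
The plan is to combine Schur's lemma (in its Dixmier form valid for $\mathbb{C}$-algebras with countable basis) with the classical Jacobson Density Theorem. The heart of the argument is to show that $D := \End_\A(M) = \mathbb{C}\cdot\mathrm{id}_M$; once this is in hand, $\mathbb{C}$-linear independence of the $v_i$ coincides with $D$-linear independence, and Jacobson density over the commutant gives exactly the statement.

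To establish $D = \mathbb{C}$, I would fix any $0\neq v\in M$. Since $M$ is irreducible, the surjection $a\mapsto av$ of $\A$ onto $M$ together with the countable-basis hypothesis forces $\dim_{\mathbb{C}} M \leq \aleph_0$. Schur's lemma makes $D$ a division $\mathbb{C}$-algebra, and evaluation at $v$ embeds $D$ into $M$, so $\dim_{\mathbb{C}} D \leq \aleph_0$ as well. If some $\phi\in D\setminus\mathbb{C}$ existed, then $\phi$ would have to be transcendental over $\mathbb{C}$ (as $\mathbb{C}$ is algebraically closed and $D$ has no zero divisors), and the family $\{(\phi-\lambda)^{-1} : \lambda\in\mathbb{C}\}\subseteq D$ would then be $\mathbb{C}$-linearly independent and uncountable, contradicting the countability of $\dim_{\mathbb{C}} D$. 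Hence $D = \mathbb{C}$.

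For the density conclusion I would then argue by induction on $m$. The case $m=1$ is immediate: by irreducibility $\A v_1 = M\ni u_1$. For the inductive step, set $I = \{a\in\A : av_1 = \cdots = av_{m-1} = 0\}$. The key claim is that $Iv_m = M$. If on the contrary $Iv_m = 0$, then the rule $(av_1,\dots,av_{m-1}) \mapsto av_m$ defines an $\A$-module homomorphism $M^{m-1}\to M$, well-defined because the inductive hypothesis makes $a\mapsto(av_1,\dots,av_{m-1})$ surjective with kernel exactly $I$. Since $D = \mathbb{C}$, any such homomorphism is a $\mathbb{C}$-linear combination of the coordinate projections, producing scalars $c_i\in\mathbb{C}$ with $a\bigl(v_m - \sum_{i<m} c_i v_i\bigr) = 0$ for every $a\in\A$; taking $a = 1$ contradicts the linear independence of $v_1,\dots,v_m$. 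Hence $Iv_m$ is a nonzero submodule of $M$, so equals $M$. Combining an $a_1\in\A$ supplied by the inductive hypothesis (so that $a_1 v_i = u_i$ for $i<m$) with an $a_2\in I$ chosen so that $a_2 v_m = u_m - a_1 v_m$ yields the desired element $a = a_1 + a_2$.

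The main subtlety I expect is the Dixmier step, where both the countable-basis hypothesis and the algebraic closedness together with uncountability of $\mathbb{C}$ are essential; once $D = \mathbb{C}$ is pinned down, the density induction is routine and uses no structural feature of $\A$ beyond associativity and unitality.
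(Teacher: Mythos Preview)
Your proof is correct and follows the same two-step strategy as the paper: first establish $\End_\A(M)=\C$ via the Dixmier form of Schur's lemma (exploiting the countable basis and the uncountability of $\C$), then apply Jacobson density over the commutant. The only difference is presentational: the paper simply cites Dixmier and Jacobson for these two ingredients, whereas you supply self-contained proofs of both---the transcendental-element argument for the first, and the induction on $m$ for the second.
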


\begin{proof} First by a result similar to Schur's Lemma (see [Di] Page 87, Proposition 2.6.5), we have
$\text{End}_{\A}(M)=\C$.

Then we recall the Density Theorem in ring theory (see [J], Page197):
Let $S$ be a completely reducible module for a ring $R$ and $R'=\text{End}_R(S)$,
$R^{''}=\text{End}_{R'}(S)$. Let $\{w_i|i\in I\}$ be a finite subset of $S$ and $a^{''}\in R^{''}$.
Then there exists an $a\in R$ such that $aw_i=a^{''}w_i$, for all $i\in I$.

In our case, we let $\A'=\C$ and $\A''=\End_{\C}(M)$. There exists $a''\in\A''$ such that
$a''(v_i)=u_i$ for $i=1,\cdots,m$. Consequently, the statement follows from the Density Theorem.
\end{proof}

For any $r=(r_1,\cdots,r_n)\in\Z^n$ and $i=1,\cdots,n-1$, we denote $D_{i,r}=D(r_{i+1}e_i-r_ie_{i+1},r)\in\S_n$.
Then we have
$$D_{i,r}(p\ot w)=D_{i,r}p\ot w+x^rp\ot \sum_{l=1}^nE_{l,i,r}w,$$
for any $p\in P$ and $w\in V$, where $E_{l,i,r}=r_lr_{i+1}E_{l,i}-r_lr_{i}E_{l,i+1}$.

We are now in the position to present the following main result in this section, which asserts that nonminuscule $\S_n$-modules are irreducible.
\begin{theorem}\label{irre}
The $\S_n$-module $\F(P,V)$ is irreducible if $P$ is an irreducible $\K_n$-module and
$V$ is an irreducible $\sl_n$-module not isomorphic to any $V(\delta_k)$ for $k=0, 1,\cdots,n-1$.
\end{theorem}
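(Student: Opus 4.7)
The plan is to prove that any nonzero $\S_n$-submodule $M\subseteq\F(P,V)$ must equal $\F(P,V)$, by exhibiting enough elements of $M$ directly.

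I would begin by choosing a minimal representative: take $\xi\in M\setminus\{0\}$ of the form $\xi=\sum_{i=1}^{m}p_i\otimes w_i$ with $p_i\neq 0$, the $w_i$ linearly independent in $V$, and $m$ as small as possible. Since $\mh\subseteq\S_n$ and $D(u,0)(p\otimes w)=D(u,0)p\otimes w$, the action of $U(\mh)$ on the first factor preserves $M$. Using $D(u,r)=x^r D(u,0)$ as operators on $P$ in $\K_n$, the $\S_n$-action reads
\[
D(u,r)\xi=\sum_{i}x^rD(u,0)p_i\otimes w_i+\sum_{i}x^rp_i\otimes(ru^T)w_i\in M,\qquad (u|r)=0.
\]

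The heart of the argument is forcing $m=1$. The first sum has $V$-support in $\mathrm{span}\{w_i\}$, while the second sum twists it by the rank-one traceless matrices $ru^T\in\sl_n$. My plan is to vary $(u,r)$ and take linear combinations of such elements, subtracting off elements of $M$ whose $V$-support already lies in $\mathrm{span}\{w_i\}$ (produced from the $U(\mh)$-action and earlier iterates), so as to produce a nonzero element of $M$ with $V$-support strictly contained in $\mathrm{span}\{w_i\}$, contradicting minimality unless $m=1$. The density theorem (Theorem~\ref{density}) applied to $V$ as a $U(\sl_n)$-module is the engine: the matrices $ru^T$ with $r\in\Z^n$ and $u\in r^\perp$ span $\sl_n$ and hence generate the full image of $U(\sl_n)$ in $\mathrm{End}(V)$ as an associative algebra. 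The hypothesis $V\not\cong V(\delta_k)$ enters here; for $V=V(\delta_k)$ the exterior-algebra identities on $\bigwedge^{\bullet}\C^n$ yield a Koszul-type obstruction that blocks the extraction at the $\S_n$-level and is exactly what produces the proper submodules constructed in Section~4, while for non-minuscule $V$ no such obstruction exists.

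Once a pure tensor $p\otimes w\in M$ is secured, the inflation is more tractable. I would first bootstrap to $P\otimes w\subseteq M$ using the $U(\mh)$-action together with a choice of $(u,r)$ with $(u|r)=0$ and $(ru^T)w=0$ (or cancelling the twist using elements of $M$ already built). Then I would apply the key observation: for any $p'\in P$, $D(u,r)(p'\otimes w)=x^rD(u,0)p'\otimes w+x^rp'\otimes(ru^T)w\in M$ has its first summand in $P\otimes w\subseteq M$, so $x^rp'\otimes(ru^T)w\in M$; since $x^r$ is invertible on $P$, this gives $P\otimes(ru^T)w\subseteq M$. Iterating this observation and invoking Theorem~\ref{density} for the simple $\sl_n$-module $V$ produces $P\otimes V\subseteq M$.

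The hardest part will be the length-reduction step. The subtlety is that $\S_n$ contains no pure $A_n$-multiplications: each twist $ru^T$ on the $V$-slot arrives inseparably bundled with the shift $x^r$ on the $P$-slot, so extracting the $\sl_n$-action on $V$ requires a combinatorial dance with the Weyl-algebra action on $P$. I expect the cleanest realization to play two elements $D(u,r)\xi$ and $D(u',r)\xi$ (with $u,u'\in r^\perp$) against each other, cancelling the common first-sum terms $x^rD(\cdot,0)p_i\otimes w_i$ and then exploiting density of the $(ru^T)$-action on $V$ to isolate one $w_i$ at a time. The non-minusculity of $V$ is precisely what guarantees that this isolation is achievable.
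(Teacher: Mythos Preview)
Your overall architecture (reduce to a pure tensor, then inflate to $P\otimes w$, then to $P\otimes V$) matches the paper's, and your final inflation step is essentially correct. The gap is in the length-reduction step, and it is a real one.

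Your proposed mechanism is to play $D(u,r)\xi$ against $D(u',r)\xi$ so as to cancel the ``first-sum'' terms $\sum_i x^rD(u,0)p_i\otimes w_i$. But both the first sum and the second sum are \emph{linear} in $u$: any linear combination $\sum_\alpha\lambda_\alpha D(u_\alpha,r)\xi$ with $\sum_\alpha\lambda_\alpha u_\alpha=0$ kills the first sum and the second sum simultaneously, so nothing is gained. Using different $r$'s does not help either, since then the $P$-slots $x^rp_i$ and $x^{r'}p_i$ are unrelated elements of $P$ and cannot be cancelled. The difficulty you correctly flag---that $\S_n$ contains no pure $A_n$-multiplications---is exactly what blocks this first-order approach.

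The paper's fix is to go to second order: apply $D_{j,s-r}D_{i,r}$ and regard the result as a polynomial in $r\in\Z^n$ with coefficients in $P\otimes V$. All terms are of degree $\le 3$ in $r$ except the pure $V$-twist term $x^sp\otimes\sum_{l,l'}E_{l',j,s-r}E_{l,i,r}w$, whose degree-$4$ part has a \emph{fixed} $P$-slot $x^sp$. A Vandermonde argument then extracts $\sum_k x^sp_k\otimes E_{t',t}^2w_k\in N$ for every $s\in\Z^n$ and every $t'\neq t$. This is also where non-minusculity enters, and it is sharper than your ``Koszul obstruction'' heuristic: the precise fact used is that $V\not\cong V(\delta_k)$ is equivalent (via Lemma~\ref{sln module} in the infinite-dimensional case, and an $\sl_2$-weight argument in the finite-dimensional case) to the existence of some root vector $E_{t',t}$ with $E_{t',t}^2\neq 0$ on $V$. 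For minuscule $V$ every root vector squares to zero, which is exactly why the argument collapses there. Once $\sum_k x^sp_k\otimes w_k\in N$ for all $s$ (after relabelling), combined with $\sum_k\partial_ip_k\otimes w_k\in N$ from $\mh$, one has $\sum_k yp_k\otimes w_k\in N$ for all $y\in\K_n$; density is then applied to $P$ (not to $V$ as you suggest) to isolate a single $p_1\otimes w_1$.
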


\begin{proof}
For any  $r,s\in\Z^n$, $p\ot w\in\F(P,V)$, $1\le i, j\le n-1$, we have
\begin{equation}\label{DDpw}
\begin{aligned}
&D_{j,s-r}D_{i,r}(p\ot w)\\
=& D_{j,s-r}\Big(D_{i,r}p\ot w+x^rp\ot \sum_{l=1}^nE_{l,i,r}w\Big)\\
=&  D_{j,s-r}D_{i,r}p\ot w+x^{s-r}D_{i,r}p\ot \sum_{l'=1}^nE_{l',j,s-r}w\\
& +D_{j,s-r} x^rp\ot \sum_{l=1}^nE_{l,i,r}w+x^sp\ot \sum_{l'=1}^nE_{l',j,s-r}\sum_{l=1}^nE_{l,i,r}w\\
=& D_{j,s-r}D_{i,r}p\ot w+D(r_{i+1}e_i-r_ie_{i+1},s)p\ot \sum_{l'=1}^nE_{l',j,s-r}w\\
& +D((s_{j+1}-r_{j+1})e_j-(s_j-r_j)e_{j+1},s) p\ot \sum_{l=1}^nE_{l,i,r}w\\
& +(r_js_{j+1}-r_{j+1}s_j)x^{s} p\ot \sum_{l=1}^nE_{l,i,r}w\\
& +x^sp\ot \sum_{l'=1}^nE_{l',j,s-r}\sum_{l=1}^nE_{l,i,r}w.
\end{aligned}
\end{equation}
Regarding the above expression as a polynomial in $r\in\Z^n$, with coefficients in $P\ot V$,
we can see that only the last summand in the RHS of (\ref{DDpw}) contains degree $4$ terms.
More precisely, we have
$$\aligned
& x^sp\ot \sum_{l'=1}^nE_{l',j,s-r}\sum_{l=1}^nE_{l,i,r}w\\
=& x^sp\ot\bigg(\sum_{l=1}^n\big((s_{l'}-r_{l'})(s_{j+1}-r_{j+1})E_{l',j}-(s_{l'}-r_{l'})(s_j-r_j)E_{l',j+1}\big)\\
&\cdot\sum\limits_{l=1}^n\big(r_lr_{i+1}E_{l,i}-r_lr_{i}E_{l,i+1}\big)w\bigg)\\
= & x^sp\ot \sum_{l,l'=1}^nE_{l,j,r}E_{l',i,r}w+ {\text { (terms\ of\ degree\ less\ than}} \ 4 {\text {  in }} r).
\endaligned$$

Suppose that $N$ is a nonzero $\S_n$-submodule of $\F(P,V)$ and take any nonzero element
$y=\sum_{k\in I}p_k\ot w_k\in N$, where $I$ is a finite index set and $p_k, k\in I$ are linearly independent.

\smallskip
\noindent{\bf Claim 1.} $\sum_{k\in I}x^sp_k\ot E_{t',t}E_{t',t}w_k\in N$, for all $s\in\Z^n, 1\leq t'\neq t\leq n$.
\smallskip

By applying $D_{j,s-r}D_{i,r}$ on $y$, we have
\begin{equation}\label{DDy}
D_{j,s-r}D_{i,r}(y)=\sum_{k\in I}x^sp_k\ot \sum_{l,l'=1}^nE_{l,j,r}E_{l',i,r}w_k+y'\in N,
\end{equation}
where $y'$ is an element in $\F(P,V)$ with degree less than $4$ when regarded as a polynomial in $r\in\Z^n$.
%Take $r=k\sum\limits_{s=1}^{n} e_s$ in (\ref{DDy}) for $k=0,1,2,3,4$.
It follows from the property of the Vandermonde determinant that
\begin{equation*}
\sum_{k\in I}x^sp_k\ot \sum_{l,l'=1}^nE_{l,j,r}E_{l',i,r}w_k\in N,
\end{equation*}
or equivalently,
\begin{equation*}%\label{EE}
\sum_{k\in I}x^sp_k\ot \sum_{l,l'=1}^n(r_{l'}r_{j+1}E_{l',j}-r_{l'}r_{j}E_{l',j+1})(r_lr_{i+1}E_{l,i}-r_lr_{i}E_{l,i+1})
w_k\in N.
\end{equation*}
Consider the above expression as a polynomial in $r_1,\cdots,r_n$ with coefficients in $P\ot V$.
If $t'>t$, then by taking $i=j=t$ and considering the coefficients of $r^2_{t'}r^2_{t+1}$ there, we can deduce the claim.
If $t'<t$, taking $i=j=t-1$ and considering the coefficients of $r^2_{t'}r^2_{t-1}$, again we deduce the claim.

\smallskip
\noindent{\bf Claim 2.}  $E_{t',t}E_{t',t}w_k\neq0$ for $k\in I$, and some $1\leq t'\neq t\leq n$.
\smallskip

If $V$ is infinite dimensional, by Lemma \ref{sln module} there exist  some $1\leq t'\neq t\leq n$ such that $E_{t',t}$ acts injectively on $V$. The claim is obvious.

Now we suppose that $V$ is finite-dimensional, and hence a highest
weight module $V(\l)$ with $\l$ being the dominant highest weight.
Let $v$ be a nonzero weight component which has maximal weight among all homogeneous components of all $w_k, k\in I$. If the weight of $v$ is not $\l$, then there exists $1\leq j\leq n-1$ such that $E_{j,j+1}v$ is nonzero, and has higher weight.
Now we consider that
\begin{equation}\label{Djk}
\begin{aligned}
&D_{j,r}\sum_{k\in I}p_k\ot w_k\\
=&\sum_{k\in I}\big(D_{j,r}p_k\ot w_k+x^{r}p_k\ot\sum_{l=1}^n(r_lr_{j+1}E_{l,j}-r_lr_{j}E_{l,j+1})w_k\big)\\
\end{aligned}
\end{equation}
Taking suitable $r\in\Z^n$ such that $r_j\gg r_i$ for $i\neq j$, we see that $-r_j^2E_{j,j+1}v$ is a nonzero weight
component of some $\sum_{l=1}^nE_{l,j,r}w_k, k\in I$ with weight higher than that of $v$.
Replacing $\sum_{k\in I}p_k\ot w_k$ with $D_{j,r}\sum_{k\in I}p_k\ot w_k$, and repeating the above process several times, we may assume that the weight of $v$ is $\l$.

Now suppose $\l=\sum_{d=1}^{n-1}a_d\delta_d$ for some $a_d\in\Z_+$.
Since $\l\neq \delta_d$ for any $d=1,\cdots,n-1$, there exist some $1\leq
d_1\leq d_2\leq n-1$ such that $a_{d_1}+a_{d_2}\geq2$. Let
$\mathfrak{s}$ be the $3$-dimensional simple Lie algebra spanned by
$E_{d_1,d_2+1}, E_{d_2+1,d_1}$ and $E_{d_1,d_1}-E_{d_2+1,d_2+1}$. Note that
$\l(E_{d_1,d_1}-E_{d_2+1,d_2+1})=a_{d_1}+a_{d_1+1}+\cdots+a_{d_2}$, hence the
$\mathfrak{s}$-module generated by $v$ has highest weight
$a_{d_1}+a_{d_1+1}+\cdots+a_{d_2}\geq 2$. In particular, $E_{d_2+1,d_1}^2v\neq 0$ and
$E_{d_2+1,d_1}^2w_k\neq 0$ for some $k\in I$.  The claim follows.
\smallskip

Using Claim 1 and Claim 2, and replacing $w_k$ with $E_{t',t}E_{t',t}w_k$ for suitable $t', t$,
we may assume that $$\sum_{k\in I}x^sp_k\ot w_k\in N,\ \forall\ s\in\Z^n.$$
Moreover, applying $\partial_i$ on $\sum_{k\in I}p_k\ot w_k$, we also have
$$\sum_{k\in I}\partial_i p_k\ot w_k\in N,\ \forall\ i=1,\cdots,n.$$
As a result, we have
$$\sum_{k\in I}y p_k\ot w_k\in N,\ \forall\ y\in\K_n.$$

By Theorem \ref{density}, we can deduce that $p_1\ot w_1\in N\setminus\{0\}$.
By the previous arguments, we have $\K_n p_1\ot w_1\subseteq N$.
Since $P$ is a simple $\K_n$-module, we get $P\ot w_1\subseteq N$.
Denote $$V'=\{w\in V\ | \ P\ot w\subseteq N\}.$$
Then $V'\neq0$. Moreover, $V'$ is stable under the action of $\sl_n$. Consequently, $V'=V$ and $P\ot V\subseteq N$, as desired.
\end{proof}

The following result gives an isomorphism criterion for nonminuscule $\S_n$-modules $\F(P,V)$.
\begin{proposition}\label{isomorphism criterion}
Let $P, P^{\prime}$ be irreducible $\K_n$-modules, and $V,V^{\prime}$ be irreducible ${\mathfrak{sl}}_n$-modules. Suppose that $V\not\cong V(\delta_k)$ for $k=0,1,\cdots, n-1$. Then the two $\S_n$-modules $\F(P,V)$ and $\F(P^{\prime}, V^{\prime})$ are isomorphic if and only if $P\cong P^{\prime}$ and $V\cong V^{\prime}$.
\end{proposition}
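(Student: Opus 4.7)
The ``if'' direction is immediate: given a $\K_n$-isomorphism $\alpha\colon P\to P'$ and an $\sl_n$-isomorphism $\beta\colon V\to V'$, the map $\alpha\otimes\beta$ is $\S_n$-linear by direct inspection of~\eqref{2.1}. So I focus on the converse: given an $\S_n$-isomorphism $\varphi\colon\F(P,V)\to\F(P',V')$, I want to recover both tensor factors as intrinsic invariants. A key first observation is that the Cartan $\mh=\{D(u,0):u\in\C^n\}\subset\S_n$ acts on $\F(P,V)$ only through the first tensor factor, since the $(ru^T)$-term of~\eqref{2.1} vanishes for $r=0$. Hence $\varphi$ is automatically $\mh$-linear. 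Moreover, since $V\not\cong V(\delta_k)$, Theorem~\ref{irre} ensures that both $\F(P,V)$ and $\F(P',V')$ are simple $\S_n$-modules.

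The main step is to fix a nonzero $w_0\in V$ and prove that $\varphi(P\otimes w_0)=P'\otimes w_0'$ for a single nonzero $w_0'\in V'$. To establish this, I would take a nonzero $p\in P$, write $\varphi(p\otimes w_0)=\sum_{k\in I}p'_k\otimes w'_k$ with the $p'_k$ and $w'_k$ linearly independent, apply the identity~\eqref{DDpw} for $D_{j,s-r}D_{i,r}$ to both $p\otimes w_0$ and its image $\varphi(p\otimes w_0)$, and extract top-degree Vandermonde coefficients in $r$ exactly as in Claim~1 of the proof of Theorem~\ref{irre}. The nondegeneracy provided by Claim~2 of that proof (which invokes the non-minuscule hypothesis on $V$ via Lemma~\ref{sln module}) should then force $|I|=1$. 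Having fixed $w_0'$ this way, defining $\alpha(p)$ by $\varphi(p\otimes w_0)=\alpha(p)\otimes w_0'$ and varying $p$ yields a linear bijection $\alpha\colon P\to P'$, whose $\K_n$-equivariance follows from $\partial_i\in\mh$ combined with the production of ``$x^s$ on the first factor'' operators from $\S_n$ exactly as in the final paragraph of the proof of Theorem~\ref{irre}. Repeating the fiber analysis with arbitrary $w\in V$ in place of $w_0$ then produces $\beta\colon V\to V'$ satisfying $\varphi(p\otimes w)=\alpha(p)\otimes\beta(w)$; its $\sl_n$-equivariance can be read off from the $(ru^T)$-term of~\eqref{2.1}.

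\textbf{Main obstacle.} The crux is the fiber claim $\varphi(P\otimes w_0)=P'\otimes w_0'$: showing that $\varphi$ cannot ``mix'' multiple $V'$-directions inside a single slice $P\otimes w_0$. This is precisely where the non-minuscule hypothesis is essential: via Lemma~\ref{sln module} and Claim~2 it guarantees that the $\sl_n$-action on the $w'_k$'s is rich enough to rule out the degenerate linear dependencies that would otherwise permit a non-factorizable $\varphi$. Making this step rigorous requires careful bookkeeping of the Vandermonde coefficients in~\eqref{DDpw}, together with an application of Theorem~\ref{density} to the $\K_n$-action on $P'$ to isolate a pure tensor.
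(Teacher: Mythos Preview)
Your proposal is correct and takes essentially the same approach as the paper: extract from $U(\S_n)$ operators of the form $p\otimes w\mapsto x^sp\otimes E_{t',t}^2w$ via Claim~1, combine with $\partial_i\in\mh$ and the Density Theorem applied to the $\K_n$-action on $P'$ to isolate a pure tensor, and then read off the two isomorphisms (the paper does the last step via annihilator comparisons rather than by building $\alpha,\beta$ explicitly, but this is cosmetic). The one imprecision is that Claim~2 does not by itself ``force $|I|=1$''---its role is only to guarantee that the pure tensor isolated by the density step is nonzero (the non-minuscule hypothesis is on $V$, giving $E_{t',t}^2w_0\neq0$, and injectivity of $\varphi$ then forces some $E_{t',t}^2w'_k\neq0$); the actual reduction to a rank-one tensor is achieved by the density argument you already name in your final paragraph, exactly as in the paper.
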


\begin{proof}
The sufficiency is obvious. Next we prove the necessity. For this sake, suppose that
$$\varphi:\F(P, V)\longrightarrow \F(P^{\prime}, V^{\prime})$$
is an isomorphism of $\S_n$-modules. Let $p\ot v$ be a nonzero element in $\F(P, V)$, and assume that
$$\varphi(p\ot v)=\sum\limits_{i=1}^m p_i\ot v_i,$$
where $p_1,\cdots, p_t$ are linearly independent and $v_i\ne0$. By rechoosing $p\ot v$,
since $V\not\cong V(\delta_k)$ for $k=0,1,\cdots, n-1$, without loss of generality, we can assume that $E_{t^{\prime},t} E_{t^{\prime},t}v_1\neq 0$ for some distinct $t^{\prime}, t\in\{1,2,\cdots n\}$.
As in Claim 1 in the proof of Theorem \ref{irre}, we have
\begin{equation}\label{iso expression}
\varphi(yp\ot E_{t^{\prime},t} E_{t^{\prime},t}v)=\sum\limits_{i=1}^m yp_i\ot  E_{t^{\prime},t} E_{t^{\prime},t}v_i,\forall y\in\K_n, 1\leq t\neq t^{\prime}\leq n.
\end{equation}
 Moreover, we note that since $p_1,\cdots, p_t$ are linearly independent, it follows from Theorem \ref{density} that there exists $u\in\K_n$ such that $up_i=\delta_{i1}p_i$ for $i=1,2,\cdots, t$. Let $y=u$ in (\ref{iso expression}), we have
\begin{equation*}
\varphi(up\ot E_{t^{\prime},t} E_{t^{\prime},t}v)=p_1\ot  E_{t^{\prime},t} E_{t^{\prime},t}v_1\neq 0,
\end{equation*}
which implies that $up\ne0$ and $E_{t^{\prime},t} E_{t^{\prime},t}v\neq 0$. Now in (\ref{iso expression}), replace $y$ with $yu$, then replace $up$ with $p$, $E_{t^{\prime},t} E_{t^{\prime},t}v$ with $v$, $E_{t^{\prime},t} E_{t^{\prime},t}v_1$ with $v^{\prime}$,
we then get
\begin{equation}\label{key expression1}
\varphi(yp\ot v)=yp_1\ot v^{\prime}, \forall y\in\K_n.
\end{equation}
Since $\varphi$ is an isomorphism, (\ref{key expression1}) implies that Ann$_{\K_n}(p)=$Ann$_{\K_n}(p_1)$. It follows that $P\cong\K_n/ \text{Ann}_{\K_n}(p)=\K_n/ \text{Ann}_{\K_n}(p_1)\cong P^{\prime}$. Moreover, the map $\varphi_1: P\longrightarrow P^{\prime}$ with
$\varphi_1(yp)=yp_1$ gives the isomorphism, where $y\in\K_n, p\in P$. Hence
\begin{equation}\label{key expression2}
\varphi(p\otimes v)=\varphi_1(p)\otimes v^{\prime}.
\end{equation}

For any distinct $i, j\in\{1,2,\cdots n\}$, $p\in P$, we have
\begin{equation}\label{S_n morphism}
\varphi\big((x_i\p_j)(p\ot v)\big)=(x_i\p_j)\varphi(p\ot v).
\end{equation}
It follows from (\ref{S_n morphism}) and (\ref{key expression2}) that
\begin{equation*}
\varphi(p\ot E_{ij}v)=\varphi_1(p)\ot E_{ij}v^{\prime}.
\end{equation*}
Consequently,
\begin{equation*}
\varphi(p\ot uv)=\varphi_1(p)\ot uv^{\prime},\,\forall\,p\in P, u\in U({\mathfrak{sl}}_n).
\end{equation*}
This implies that  Ann$_{U({\mathfrak{sl}}_n)}(v)=$Ann$_{U({\mathfrak{sl}}_n)}(v^{\prime})$. Since $V$ and $V^{\prime}$ are simple $U({\mathfrak{sl}}_n)$-modules, it follows that $V\cong U({\mathfrak{sl}}_n)/ \text{Ann}_{U({\mathfrak{sl}}_n)}(v)=U({\mathfrak{sl}}_n)/ \text{Ann}_{U({\mathfrak{sl}}_n)}(v^{\prime})\cong V^{\prime}$, as desired.
\end{proof}

\section{Minuscule $\S_n$-modules $\F(P,V(\delta_k))$}
In this section, we study the structure of the minuscule $\S_n$-modules
$\F(P,V(\delta_k))$ for $k=0,1,\cdots,n-1$. First we recall some known results from \cite{LLZ} and \cite{GLLZ}.

%We first define some $\S_n$-submodules of $\F(P,V(\delta_k))$.
We make $V(\delta_k)$ into a $\gl_n$-module with the identity matrix acting as a scalar multiplication by $k$ and hence $\F(P,V(\delta_k))$ becomes a $W_n$-module. For each $0\leq k\leq n-1$, we can define the natural linear map:
\begin{equation*}\begin{array}{crllc}
d_k: & \F(P, V(\delta_k)) &  \longrightarrow & \F(P, V(\delta_{k+1}))&\\%\hskip5pt&\\
& p \otimes w& \mapsto& \sum\limits_{i=1}^n(\partial_ip)\otimes (e_i\wedge w),& \forall\ p\in P, w\in V({\delta_k}),
\end{array}\end{equation*}
where $V(\delta_n)$ is the irreducible $\gl_n$ module with highest weight $\delta_n$ and the action of the identity matrix
is the scalar multiplication by $n$.
It is straightforward to check that all $d_k$ are $W_n$-module homomorphisms and
$d_{k+1}d_{k}=0$. Thus we obtain the following generalized de Rham complex of $W_n$-modules:
$$0\hskip -3pt \to\hskip -3pt \F(P, V(\delta_0))\hskip -3pt \to\hskip -3pt  \F(P, V(\delta_1))\hskip -3pt \to\hskip -3pt \cdots \F(P, V(\delta_{n-1}))\hskip -3pt \to \hskip -3pt \F(P, V(\delta_n))\hskip -3pt \to\hskip -3pt  0.$$

In \cite{LLZ}, the authors studied another $W_n$-module structure on $P\otimes V$ defined as follows:
\begin{equation}\label{Action2}
 (x^{r-e_j}\partial_{j})\circ  (p\otimes w)
=((x^{r-e_j}\partial_{j})p)\otimes w+ \sum_{i=1}^nr_i(x^{r-e_i}p)\otimes E_{ij}(w),
\end{equation}
%and $x^r (p\otimes w)=(x^{r}p)\otimes w$
for all $r=(r_1,\cdots,r_n)^T\in \Z^n, p\in P$ and $w\in V.$
Denote this module by $F(P, V)$. For such modules, one can also define the natural homomorphisms:
\begin{equation*}\begin{array}{crllc}
\pi_k: & F(P, V(\delta_k)) &  \longrightarrow & F(P, V(\delta_{k+1}))\\%\hskip5pt&\\
& p \otimes w& \mapsto& \sum\limits_{i=1}^n(x^{-e_i}\partial_ip)\otimes (e_i\wedge w), \forall\ p\in P, w\in V({\delta_k}).
\end{array}\end{equation*}
and the generalized de Rham complex:
$$0\hskip -3pt \to \hskip -3pt F(P, V(\delta_0))\hskip -3pt \to \hskip -3pt  F(P, V(\delta_1))\hskip -3pt \to\hskip -3pt \cdots F(P, V(\delta_{n-1}))\hskip -3pt \to \hskip -3pt F(P, V(\delta_n))\hskip -3pt \to\hskip -3pt  0.$$
The two classes of $W_n$-homomorphisms and generalized de Rham complexes defined above are essentially the same. Let us explain this.

For any $\lambda=(\l_1,\cdots,\l_n)\in \C^n$, we have the automorphism $\tilde{\lambda}$ of $\K_n$ defined by
$$\tilde{\l}(x^{\a})=x^{\a}, \quad \tilde{\l}(\partial_j)=\partial_j- {\lambda}_j,\ \forall\ j=1,2,\ldots,n.$$
Then for any module $P$ over the associative algebra $\K_n$, we have the new module $P^{\tilde{\lambda}}=P$ with the action
$$y \circ_{\lambda} p={\tilde{\lambda}}(y) p,\ \forall\ y\in \K_n,\ p\in P^{\tilde{\lambda}}.$$
If  $V$ is a simple weight $\gl_n$-module, there is a $\lambda\in\C^n$ such that
\begin{equation}\label{weightset}
V=\bigoplus_{\mu\in \Z^n}V_{\lambda+\mu},
\end{equation}
where $V_{\lambda+\mu}=\{  v\in V\mid E_{ii}v=(\lambda_i+\mu_i) v, \ \forall\ i=1, 2, \cdots, n\}$.

\begin{lemma}([GLLZ, Theorem 2.3])\label{GLLZ iso} If $V$ is a simple weight module over $\gl_n$ with decomposition
(\ref{weightset}) for some $\l\in\C^n$ and $P$ is a module over the associative algebra $\K_n$.
Then the linear  map
$$\aligned
\varphi: \F(P, V) & \longrightarrow& F(P^{\tilde{\lambda}}, V)\hskip5pt&\\
p \otimes v_\mu& \mapsto& x^{-\mu}p \otimes v_\mu,&\quad\forall\ v_\mu\in V_{\lambda+\mu},\ p\in P
\endaligned$$
is a $W_n$-module isomorphism.%, where $v_\mu\in V_{\lambda+\mu}$ and $p\in P$.
\end{lemma}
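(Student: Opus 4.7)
The plan is to verify that $\varphi$ is well-defined, bijective, and intertwines the two $W_n$-actions. Well-definedness is immediate from the weight decomposition \eqref{weightset}: every element of $\F(P,V)$ is a finite linear combination of elementary tensors $p\otimes v_\mu$ with $v_\mu\in V_{\lambda+\mu}$, and $\varphi$ is extended linearly from such tensors. Bijectivity follows by writing down the candidate inverse $\psi(p\otimes v_\mu)=x^\mu p\otimes v_\mu$; since $x^{\pm\mu}$ are invertible in $\K_n$, the identities $\psi\circ\varphi=\mathrm{id}=\varphi\circ\psi$ are immediate.

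The substantive step is the intertwining property. Since $W_n$ is spanned by the elements $x^s\partial_j=D(e_j,s)$ with $s\in\Z^n$ and $1\le j\le n$, it suffices to compare
\begin{equation*}
\varphi\bigl((x^s\partial_j)(p\otimes v_\mu)\bigr) \quad\text{and}\quad (x^s\partial_j)\cdot\varphi(p\otimes v_\mu)
\end{equation*}
on an arbitrary weight tensor $p\otimes v_\mu$ with $v_\mu\in V_{\lambda+\mu}$. On the left I would expand using \eqref{2.1}, extended from $\S_n$ to $W_n$ via the $\gl_n$-action on $V$, and then apply $\varphi$ termwise, splitting the internal sum over $i$ into the $i=j$ piece (where $E_{jj}v_\mu=(\lambda_j+\mu_j)v_\mu$ still lies in $V_{\lambda+\mu}$) and the $i\neq j$ pieces (where $E_{ij}v_\mu\in V_{\lambda+\mu+e_i-e_j}$). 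On the right I would expand using \eqref{Action2} interpreted on $P^{\tilde\lambda}$ — so that each action of $\partial_j$ on $P$ is replaced by $\partial_j-\lambda_j$ — and then push $x^{-\mu}$ past $\partial_j$ using the commutator identity $[\partial_j,x^{-\mu}]=-\mu_j\,x^{-\mu}$ in $\K_n$.

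After the dust settles, both sides should reduce to
\begin{equation*}
x^{s-\mu}\partial_j p\otimes v_\mu \;+\; s_j(\lambda_j+\mu_j)\,x^{s-\mu} p\otimes v_\mu \;+\; \sum_{i\neq j}s_i\, x^{s-\mu+e_j-e_i}p\otimes E_{ij}v_\mu.
\end{equation*}
The main obstacle is purely the bookkeeping on the right-hand side, where three separate diagonal contributions enter the coefficient of $x^{s-\mu}p\otimes v_\mu$: a $-\lambda_j$ from the $\tilde\lambda$-twist, a $-\mu_j$ from the commutator $[\partial_j,x^{-\mu}]$, and an $(s_j+1)(\lambda_j+\mu_j)$ from the $i=j$ term of the sum in \eqref{Action2}. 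The key cancellation is that $-(\lambda_j+\mu_j)+(s_j+1)(\lambda_j+\mu_j)=s_j(\lambda_j+\mu_j)$, which matches the $i=j$ contribution of \eqref{2.1} on the left; once this is verified, the remaining pieces agree by inspection and linearity in $s$ and $v_\mu$ finishes the proof.
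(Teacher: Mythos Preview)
Your verification is correct. The paper does not prove this lemma at all: it is quoted verbatim as \cite[Theorem 2.3]{GLLZ}, so there is no in-paper argument to compare against. Your direct computation---checking bijectivity via the obvious inverse and then matching the two actions on generators $x^s\partial_j$ applied to weight tensors $p\otimes v_\mu$---is exactly the natural (and essentially only) way to establish such an isomorphism, and your bookkeeping of the three diagonal contributions on the $F(P^{\tilde\lambda},V)$ side is accurate: the $-\lambda_j$ from the twist, the $-\mu_j$ from $[\partial_j,x^{-\mu}]$, and the $(s_j+1)(\lambda_j+\mu_j)$ from the $i=j$ summand of \eqref{Action2} combine to $s_j(\lambda_j+\mu_j)$, matching the left side.
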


\begin{remark}
For the simple weight  $\gl_n$-module $V=V(\delta_k)$ with $k=0,\dots, n$, we can take $\lambda=(0,\cdots,0)^T$. In this case, $\tilde{\lambda}$ is the identity automorphism, and the $\K_n$-module structure of $P^{\tilde{\lambda}}$ coincides with that of $P$.
\end{remark}

The following result, implicitly used in \cite{GLLZ}, can be verified directly.

\begin{lemma}\label{natural equiv} The de Rham complexes defined by $\{d_k, k=0,\cdots,n\}$ and $\{\pi_k, k=0,\cdots,n\}$
are naturally equivalent. More precisely, we have the following commutative diagram of $W_n$-modules:
\begin{equation*}\begin{array}{clc}
\F(P, V(\delta_k)) & \xrightarrow{d_k} & \F(P, V(\delta_{k+1}))\\
\downarrow{\phi_k} &  & \downarrow{\phi_{k+1}}\\
F(P, V(\delta_k)) &  \xrightarrow{\pi_k} & F(P, V(\delta_{k+1})),
\end{array}\end{equation*}
where $\phi_k$ is the $W_n$-module isomorphism defined in Lemma \ref{GLLZ iso} with $V=V(\d_k)$, $\l=(0,\cdots,0)^T$,
and similarly for $\phi_{k+1}$.
\end{lemma}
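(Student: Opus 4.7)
The plan is to verify the commutativity directly on a spanning set, namely elements of the form $p\otimes w$ where $w=e_{i_1}\wedge\cdots\wedge e_{i_k}$ with $i_1<\cdots<i_k$ is a weight basis vector of $V(\delta_k)$. Such $w$ has $\gl_n$-weight $\mu=\sum_{j=1}^k e_{i_j}$, i.e.\ $\mu_i=1$ if $i\in\{i_1,\dots,i_k\}$ and $\mu_i=0$ otherwise. The key observation I will exploit is that $e_i\wedge w=0$ precisely when $\mu_i=1$, so all summations over $i=1,\dots,n$ in $d_k$ and $\pi_k$ effectively range over the indices $i$ with $\mu_i=0$.

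Next I will unfold both compositions. Going top-then-down gives
\[
\phi_{k+1}\bigl(d_k(p\otimes w)\bigr)=\sum_{i:\mu_i=0}x^{-(\mu+e_i)}(\partial_i p)\otimes(e_i\wedge w),
\]
using that $e_i\wedge w$ has weight $\mu+e_i$. Going down-then-bottom, I first apply $\phi_k$ to get $x^{-\mu}p\otimes w$ and then $\pi_k$. Here I need the Weyl algebra commutation relation, which in $\K_n$ reads $\partial_i\,x^{-\mu}=x^{-\mu}(\partial_i-\mu_i)$ (since $\partial_i=x_i\partial/\partial x_i$). This yields
\[
\pi_k\bigl(\phi_k(p\otimes w)\bigr)=\sum_{i=1}^n x^{-\mu-e_i}(\partial_i-\mu_i)p\otimes(e_i\wedge w)=\sum_{i:\mu_i=0}x^{-\mu-e_i}(\partial_i p)\otimes(e_i\wedge w),
\]
where in the second equality the restriction $e_i\wedge w=0$ for $\mu_i=1$ eliminates exactly those summands where the offending $-\mu_i$ term would appear. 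The two expressions therefore coincide, establishing $\phi_{k+1}\circ d_k=\pi_k\circ\phi_k$ on the spanning set $\{p\otimes w\}$, and hence on all of $\F(P,V(\delta_k))$ by linearity.

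There is no serious obstacle. The only subtle point, which I want to emphasize in the write-up, is that the discrepancy between $d_k$ (which uses $\partial_i$) and $\pi_k$ (which uses $x^{-e_i}\partial_i$) is absorbed into the weight-twist $\phi_k(p\otimes v_\mu)=x^{-\mu}p\otimes v_\mu$: the $-\mu_i$ produced by commuting $\partial_i$ past $x^{-\mu}$ is harmless precisely on the indices where $e_i\wedge w\neq 0$. Note also that the remark preceding the lemma guarantees $\phi_k$ is a $W_n$-module isomorphism for $V=V(\delta_k)$ with $\lambda=0$, so once commutativity is checked the naturality statement is complete.
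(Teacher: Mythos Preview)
Your proof is correct and is precisely the direct verification the paper alludes to: the paper gives no argument beyond stating that the result ``can be verified directly,'' and your computation carries out that verification in full. The key identity $\partial_i x^{-\mu}=x^{-\mu}(\partial_i-\mu_i)$ together with the vanishing $e_i\wedge w=0$ for $\mu_i=1$ is exactly what makes the two compositions agree, and you have identified and used this correctly.
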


Certainly,  the above results still hold if we regard the $W_n$-modules as modules over $\S_n$.
For $1\leq k\leq n-1$, denote by $\L_n(P,k)$ the image of $d_{k-1}$, Moreover, we set $\L_n(P,0)=0$.
It follows from a straightforward calculation that
$$\L_n(P,k)=\span\Big\{\sum_{l=1}^n(\p_l p)\otimes E_{lj}w, p\in P, w\in V(\d_k), 1\leq i\leq n\Big\}.$$
For $0\leq k\leq n-1$, we let
$$\tL_n(P,k)=\{y\in \F(P, V(\delta_k))  \ | \ W_n y\subseteq \L_n(P,k)\}.$$
Both $\L_n(P,k)$ and $\tL_n(P,k)$ are $W_n$-submodules of $\F(P,V(\d_k))$,
and hence also $\S_n$-submodules of $\F(P,V(\d_k))$.
Furthermore we can replace $W_n$ with $\S_n$ in the definition of $\tL_n(P,k)$. The following result asserts this.

\begin{lemma}\label{equiv def} For any $k=0,1,\cdots,n-1$, we have
\begin{itemize}\item[(a)] $\tL_n(P,k)=\{y\in \F(P, V(\delta_k))  \ | \ \S_n y\subseteq \L_n(P,k)\}$;
\item[(b)] $\tL_n(P,k)=\{v\in \F(P, V(\delta_k))  \ | \ \S_n v\subseteq \tL_n(P,k)\}$.\end{itemize}
\end{lemma}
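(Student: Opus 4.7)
The plan is to prove (a) first, and then derive (b) using (a) together with the $W_n$-invariance of $\tL_n(P,k)$. In both statements, the inclusion $\tL_n(P,k)\subseteq\{y:\S_n y\subseteq\,\cdot\,\}$ is immediate: for (a) because $\S_n\subseteq W_n$, and for (b) because $\tL_n(P,k)$ is $W_n$-invariant (hence $\S_n$-invariant). The substance lies in the reverse inclusions.

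For (a), assume $\S_n y\subseteq\L_n(P,k)$. Any $D(u,r)\in W_n$ with $(u|r)\neq 0$ decomposes as $D(u',r)+\alpha\cdot x^r\partial_j$ with $D(u',r)\in\S_n$ and $r_j\neq 0$, so it suffices to show $x^r\partial_j y\in\L_n(P,k)$ when $r\neq 0$ and $r_j\neq 0$. The guiding identity is $[\partial_i,x^r\partial_j]=r_i\cdot x^r\partial_j$ for some $i$ with $r_i\neq 0$, yielding
\[
r_i\cdot(x^r\partial_j y)=\partial_i(x^r\partial_j y)-(x^r\partial_j)(\partial_i y),
\]
and the second term is in $\L_n(P,k)$ because $\partial_i y\in\L_n(P,k)$ and $\L_n(P,k)$ is $W_n$-invariant. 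Combining this with the full family of $\S_n$-relations $\sum_l u_l\cdot(x^r\partial_l y)\in\L_n(P,k)$ as $u$ varies over $\{u:(u|r)=0\}$ forces, modulo $\L_n(P,k)$, the tuple $(x^r\partial_l y)_l$ to be proportional to $r$; further $\S_n$-commutator manipulations (of Vandermonde type, as in Claim~1 of Theorem~\ref{irre}) then collapse the proportionality constant to zero. The main obstacle is the subcase $r=me_j$ with $m\neq 0$, where $\S_n^{(r)}$ consists only of $\{x_j^m\partial_l:l\neq j\}$ and gives no direct information about $x_j^m\partial_j y$; here I would synthesize the required relation by bracketing $\S_n$-elements at complementary degrees $s$ and $me_j-s$ and extracting polynomial identities in $s$.

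For (b), suppose $\S_n v\subseteq\tL_n(P,k)$; we want $v\in\tL_n(P,k)$, equivalently by (a) that $\S_n v\subseteq\L_n(P,k)$. For $D(u,r)\in\S_n$ with $r\neq 0$ and $r_i\neq 0$, the commutator trick gives
\[
D(u,r)v=\tfrac{1}{r_i}\bigl(\partial_i D(u,r)v-D(u,r)\partial_i v\bigr),
\]
and both terms lie in $\L_n(P,k)$: the first since $D(u,r)v\in\tL_n(P,k)$ implies $\partial_i D(u,r)v\in W_n\tL_n(P,k)\subseteq\L_n(P,k)$, the second since $\partial_i v\in\tL_n(P,k)$ implies $D(u,r)\partial_i v\in W_n\tL_n(P,k)\subseteq\L_n(P,k)$. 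The remaining case $D=\partial_l$ requires showing $\partial_l v\in\L_n(P,k)$; this is the main technical obstacle in (b), since the commutator identity does not extract $\partial_l$ from a bracket with $\S_n$. I would handle it by first using $[D(u,r),\partial_l]=-r_l D(u,r)$ for $D(u,r)\in\S_n$ at nonzero degree (together with the preceding step, which puts $D(u,r)v$ in $\L_n(P,k)$) to conclude that $\S_n(\partial_l v)\subseteq\L_n(P,k)$, and then leveraging the explicit description $\L_n(P,k)=\mathrm{Im}(d_{k-1})$ together with the compatibility of $\partial_l$ with $d_{k-1}$ to force the class of $\partial_l v$ in the $W_n$-invariant quotient $\tL_n(P,k)/\L_n(P,k)$ to vanish.
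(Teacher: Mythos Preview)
Your commutator approach is quite different from the paper's, and in its present form it has a genuine gap in both parts.

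The paper does not manipulate brackets in $W_n$ at all. Instead it exploits the de Rham differential $d_k$ and the identification $\tL_n(P,k)=\ker d_k$ (which comes from \cite{LLZ}). For (a), if $y\notin\tL_n(P,k)$ but $\S_ny\subseteq\L_n(P,k)$, then $d_k(y)\ne 0$ while $\S_n d_k(y)=d_k(\S_ny)\subseteq d_k(\L_n(P,k))=0$. In particular each $\partial_l$ kills $d_k(y)$, so writing $d_k(y)=\sum p_i\otimes w_i$ with the $w_i$ independent forces $\partial_l p_i=0$ for all $l,i$; this pins $P$ down as $A_n$, and then one invokes the known fact that $\L_n(A_n,k+1)$ has no nonzero $\mh$-fixed vector. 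Part (b) is literally the same: if $\S_nv\subseteq\tL_n(P,k)=\ker d_k$ and $v\notin\tL_n(P,k)$, then again $d_k(v)\ne 0$ and $\S_n d_k(v)=0$.

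In your approach to (a), the step ``further $\S_n$-commutator manipulations then collapse the proportionality constant to zero'' is exactly the heart of the matter and is not done. Your reduction does show that, modulo $\L_n(P,k)$, one has $[x^r\partial_l y]=r_l z_r$ for some class $z_r$, and the identity $r_i[x^r\partial_j y]=\partial_i[x^r\partial_j y]$ shows each $z_r$ is an $\mh$-weight vector of weight $r$ in $\F/\L_n$. But nothing you have written forces $z_r=0$: the $\S_n$-relations you can extract only move you among the $z_{r+s}$ (e.g.\ $(x^s\partial_i)z_r=r_i z_{r+s}$ when $s_i=0$), and since $[\S_n^{(r)},\S_n^{(-r)}]=0$ you never reach $z_0=0$ from a nonzero $z_r$. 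This is precisely where the paper's use of $d_k$ is decisive: $d_k$ kills $\L_n(P,k)$ and hence takes the ambiguity class $z_r$ into $\L_n(P,k+1)$, where the $\mh$-annihilation argument applies.

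Your approach to (b) has the same defect at the endpoint. The commutator trick $D(u,r)v=\tfrac{1}{r_i}(\partial_i D(u,r)v-D(u,r)\partial_i v)$ genuinely handles the $r\ne 0$ part of $\S_n v\subseteq\L_n(P,k)$, but for $\partial_l v$ you only recover $\S_n(\partial_l v)\subseteq\L_n(P,k)$, which by (a) yields $\partial_l v\in\tL_n(P,k)$ --- exactly the hypothesis you already had. The ``compatibility of $\partial_l$ with $d_{k-1}$'' does not close this gap: the obstruction lives in $\tL_n(P,k)/\L_n(P,k)=\ker d_k/\operatorname{Im}d_{k-1}$, which is the de Rham cohomology and can be nonzero (e.g.\ for $P=A_n$), so one cannot conclude $\partial_l v\in\L_n(P,k)$ without an argument that actually rules out a nontrivial cohomology class. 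The paper sidesteps this entirely by applying $d_k$ to $v$ rather than to $\partial_l v$.
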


\begin{proof} (a) Suppose the result  is not true.
We can choose some $y\in \F(P, V(\delta_k))\setminus\tL_n(P,k)$ such that $\S_ny\subseteq \L_n(P,k)$.
Then we have $d_k(y)\neq0$ and $\S_nd_k(y)=0$ in $\L_n(P,k+1)$.
In particular, $\p_ld_k(y)=0$ for all $l=1,\cdots,n$ in $\F(P,V(\d_{k+1}))$.
Suppose that $d_k(y)=\sum_{i\in I}p_i\ot w_i$, where $I$ is a finite index set, $p_i\in P, i\in I$, are all nonzero and $w_i\in V(\d_{k+1}), i\in I$, are linearly independent. Then we have
$$\p_ld_k(y)=\sum_{i\in I}\p_lp_i\ot w_i=0,\ \forall\ l=1,\cdots,n.$$
This implies $\p_lp_i=0$ for all $l=1,\cdots,n$ and $i\in I$ and hence
$P$, as a simple $\K_n$ module, is a weight module with all weights in $\Z^n$.
Moreover, we have $P\cong A_n$, the natural module. In this case we know that  $ \L_n(A_n, k+1)$ is a simple $W_n$-module which does not have zero weight vectors (see Page 2369 in \cite{GZ}), contradicting the facts
$d_k(y)\neq0$ and $\S_nd_k(y)=0$. Part (a) follows.

(b) Suppose the result  is not true.
We can choose  $v\in \F(P, V(\delta_k))\setminus\tL_n(P,k)$ such that $\S_nv\subseteq \tL_n(P,k)\setminus\L_n(P,k)$. Then $ \S_n\S_n v\subseteq \L_n(P,k)$ and we have $d_k(\S_nv)\neq0$ and $\mh d_k(\S_nv)=0$ in $\L_n(P,k+1)$.
 As in the arguments in the previous paragraph, we deduce that $P\cong A_n$, the natural module. In this case we know that  $\L_n(A_n, k+1)$ is a simple $W_n$-module which does not have zero weight vectors,
contradicting the facts $d_k(\S_nv)\neq0$ and $\mh d_k(\S_nv)=0$. Part (b) follows.
\end{proof}

\begin{remark}
In the proof of Lemma \ref{equiv def}, we actually proved that
$$\tL_n(P,k)=\{y\in \F(P, V(\delta_k))  \ | \ \p_l y\subseteq \L_n(P,k),\ \forall\ l=1,\cdots,n\}$$
for $k=0,1,\cdots,n-1$.
\end{remark}

The following result for $\F(P, V(\delta_k))$  directly follows from the corresponding one for $F(P,V(\d_k))$ in \cite[Lemma 3.4, Theorem 3.5]{LLZ} and the relationship between  $\F(P,V(\d_k))$ and $F(P,V(\d_k))$ given in Lemma \ref{natural equiv}.

\begin{lemma}\label{known LLZ}
Keeping the notation as before, we have
\begin{itemize}
\item[(a)] $\tL_n(P,k)={\rm{ker}}(d_{k})$ for $0\leq k\leq n-1$;
\item[(b)] $\L_n(P,k)$ is a proper $\S_n$-submodule of $\F(P,V(\d_k))$ for $1\leq k\leq n-1$;
\item[(c)] The $\S_n$-module $\F(P, V(\delta_k))$ is not irreducible for $1\leq k\leq n-1$.
\end{itemize}
\end{lemma}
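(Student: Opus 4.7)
The plan is to transfer the corresponding results for $F(P,V(\d_k))$ proved in \cite[Lemma 3.4, Theorem 3.5]{LLZ} over to $\F(P,V(\d_k))$ via the natural equivalence of complexes given in Lemma \ref{natural equiv}. Writing $\phi_k$ for the $W_n$-module isomorphism of Lemma \ref{GLLZ iso} specialized to $V=V(\d_k)$, $\l=(0,\dots,0)^T$, and letting $L_n(P,k)=\text{im}(\pi_{k-1})$ and $\widehat{L}_n(P,k)=\{z\in F(P,V(\d_k))\mid W_n z\subseteq L_n(P,k)\}$ denote the analogous objects on the $F$-side, the first step is to verify that $\phi_k$ carries $\L_n(P,k)$ onto $L_n(P,k)$ and $\tL_n(P,k)$ onto $\widehat{L}_n(P,k)$. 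The first identification is immediate from the commutativity $\phi_k\circ d_{k-1}=\pi_{k-1}\circ\phi_{k-1}$ together with the surjectivity of $\phi_{k-1}$. The second identification is a one-line check using that $\phi_k$ is a $W_n$-module isomorphism, so the condition $W_n y\subseteq\L_n(P,k)$ is equivalent to $W_n\phi_k(y)\subseteq L_n(P,k)$.

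For part (a), the same commutative diagram applied one step up yields $\phi_{k+1}(\ker d_k)=\ker\pi_k$. The $F$-side analogue of (a), which is part of \cite[Lemma 3.4]{LLZ}, asserts $\ker\pi_k=\widehat{L}_n(P,k)$, and composing with $\phi_k^{-1}$ together with the first paragraph yields $\ker d_k=\tL_n(P,k)$. For part (b), since $\phi_k$ is an $\S_n$-module isomorphism and since \cite[Theorem 3.5]{LLZ} states that $L_n(P,k)$ is a proper $\S_n$-submodule of $F(P,V(\d_k))$ for $1\leq k\leq n-1$, its preimage $\L_n(P,k)$ is a proper $\S_n$-submodule of $\F(P,V(\d_k))$.

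For part (c), it suffices to show that $\L_n(P,k)$ is a \emph{nonzero} proper $\S_n$-submodule for $1\leq k\leq n-1$, since properness is part (b). To see $\L_n(P,k)\neq 0$, observe that if $d_{k-1}=0$ then $\sum_{i=1}^n(\p_i p)\otimes(e_i\wedge w)=0$ for every $p\in P$ and every $w\in V(\d_{k-1})$, which forces $\p_i p=0$ for all $p\in P$ and all $1\leq i\leq n$; but then the Weyl algebra relation $\p_i x_i=x_i\p_i+x_i$ applied to any $p$ gives $x_i p=0$, contradicting the invertibility of $x_i$ on the $\K_n$-module $P$. The main (and essentially only) real task is the bookkeeping in the first paragraph; once the identifications of $\L_n(P,k)$ and $\tL_n(P,k)$ with their $F$-side counterparts are in place, everything else is a direct quotation of the results from \cite{LLZ}.
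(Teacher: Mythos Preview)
Your approach is exactly the paper's: the paper simply states that the lemma ``directly follows from the corresponding one for $F(P,V(\d_k))$ in \cite[Lemma~3.4, Theorem~3.5]{LLZ} and the relationship between $\F(P,V(\d_k))$ and $F(P,V(\d_k))$ given in Lemma~\ref{natural equiv},'' and you have supplied the bookkeeping behind that sentence. One small slip: in your argument for (a) you write $\phi_{k+1}(\ker d_k)=\ker\pi_k$, but $\ker d_k\subseteq\F(P,V(\d_k))$ lies in the domain of $\phi_k$, not $\phi_{k+1}$; the correct identity (and the one your commutative-diagram reasoning actually proves) is $\phi_k(\ker d_k)=\ker\pi_k$.
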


Next we will first study the $\S_n$-modules $\F(P,V(\d_0))$, $\F(P,V(\d_1))$, and then study other $\F(P,V(\d_k))$ for $n>2$.

\begin{lemma}\label{F'(P,0)}
$\mh\F(P,V(\d_0))$ is an $\S_n$-submodule of $\F(P,V(\d_0))$ and the quotient $\F(P,V(\d_0))/\mh\F(P,V(\d_0))$ is trivial.
\end{lemma}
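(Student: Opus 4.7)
The plan is to reduce the statement to two short identities in the Weyl algebra $\K_n$. Since $V(\d_0)\cong\C$ is the trivial $\sl_n$-module, every matrix $ru^T\in\sl_n$ acts as zero on it, so the formula \eqref{2.1} collapses to $D(u,r)(p\ot 1)=D(u,r)p\ot 1$. Consequently $\F(P,V(\d_0))$ is just $P$ with its natural $\S_n$-action coming from the inclusion $\S_n\subset W_n\subset\K_n$, and $\mh\F(P,V(\d_0))$ corresponds to the subspace $\mh P:=\span\{\p_ip\mid 1\le i\le n,\ p\in P\}$ of $P$.

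The crux is a single factorization. Using $[\p_i,x^r]=r_ix^r$ in $\K_n$ and the divergence-zero condition $(u|r)=0$, one computes
$$D(u,r)=\sum_{i=1}^nu_ix^r\p_i=\sum_{i=1}^nu_i\p_ix^r-(u|r)x^r=\p_u x^r,$$
where $\p_u:=\sum_{i}u_i\p_i\in\mh$. Applied to any $p\in P$ this gives $D(u,r)p=\p_u(x^rp)\in\mh P$, which shows at once that $\S_n\cdot\F(P,V(\d_0))\subseteq\mh\F(P,V(\d_0))$, i.e. the quotient is trivial.

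For the submodule property, the same factorization yields $[D(u,r),\p_i]=-r_iD(u,r)$, so
$$D(u,r)\p_ip=\p_iD(u,r)p-r_iD(u,r)p.$$
Both terms lie in $\mh P$: the first because $\mh P$ is closed under left multiplication by any $\p_j$ (the $\p_j$'s commute), and $D(u,r)p\in\mh P$ by the previous step; the second for the same reason. Extending linearly over generators $\p_ip$ shows that $\mh\F(P,V(\d_0))$ is stable under $\S_n$.

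There is no real obstacle; the whole argument is mechanical once the factorization $D(u,r)=\p_ux^r$ is spotted. Conceptually, it is precisely the divergence-zero hypothesis $(u|r)=0$ that eliminates the stray term $(u|r)x^r$ and lets every element of $\S_n$ absorb a factor from $\mh$ on the left, which is what forces the quotient module to be trivial.
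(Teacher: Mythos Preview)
Your proof is correct and uses the same key identity as the paper: the divergence-zero condition $(u|r)=0$ gives $D(u,r)=D(u,0)x^r$ (your $\p_ux^r$), so $D(u,r)p\in\mh P$ for every $p$. The only difference is that you prove the submodule property separately via a commutator, whereas the paper notes that once $\S_n\F(P,V(\d_0))\subseteq\mh\F(P,V(\d_0))$ is established, $\S_n$-stability of $\mh\F(P,V(\d_0))$ is automatic since $\mh\F(P,V(\d_0))\subseteq\F(P,V(\d_0))$.
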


\begin{proof} We only need to show that $\S_n\F(P,V(\d_0))\subseteq \mh\F(P,V(\d_0))$.
For  any $p\in P$, $u\in\C^n, r\in\Z^n$ with $(u|r)=0$, we have
$$\aligned
  D(u,r)(p\ot 1)
&=D(u,r)p\ot 1=x^rD(u,0)p\ot 1=D(u,0)x^rp\ot 1,
\endaligned$$
which lies in $\mh\F(P,V(\d_0))$, as desired.
\end{proof}

Note that for $P=A_n$, the natural module, we have $A_n=A'_n\oplus\C$ as $\S_n$-modules,
where $A'_n=\sum_{r\in\Z^n\setminus\{0\}}\C x^r$ is a simple $\S_n$-submodule of $A_n$
and $\C=\C x^0$ is the trivial $\S_n$-submodule of $A_n$.
Hence we have $\F(P,V(\d_0))=\mh\F(P,V(\d_0))\oplus (\C\ot V(\d_0))$ as $\S_n$-modules,
where $\mh\F(P,V(\d_0))=A'_n\ot V(\d_0)\cong A'_n$
and $\C\ot V(\d_0)\cong \C$.

\begin{proposition}\label{k=0}
The following statements hold.
\begin{itemize}
\item[(a)]
If $P\not\cong A_n$, then $\F(P,V(\d_0))$ has a unique simple $\S_n$-submodule
$\mh\F(P,V(\d_0))$ and any nonzero submodule is of the form $P'\ot V(\d_0)$, where $P'$ is a subspace with
$\mh P\subseteq P'\subseteq P$;
\item[(b)]If $P\cong A_n$, then $\F(P,V(\d_0))\cong A'_{n}\oplus\C$.\end{itemize}
In particular, $\F(P,V(\d_0))$ is a simple $\S_n$-module  if and only if $\mh P=P$.
\end{proposition}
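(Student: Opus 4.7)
The plan is threefold: establish (b) by direct computation, reduce (a) to the simplicity of $\mh P$ as an $\S_n$-module, and derive (c) as a corollary.

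For (b), with $P=A_n$, the formula $D(u,r)\cdot x^s = (u|s)x^{r+s}$ (valid when $(u|r)=0$) shows that $\C\cdot x^0$ is a trivial $\S_n$-submodule and $A'_n=\span\{x^s : s\neq 0\}$ is a complementary $\S_n$-submodule. Simplicity of $A'_n$ follows from a path-connectedness argument in $\Z^n\setminus\{0\}$: for $n\geq 2$, given nonzero $s,t\in\Z^n$, either $t\not\parallel s$ (one step suffices) or one uses an intermediate $s+e_j$ with $e_j\not\parallel s$, giving a two-step sequence of $\S_n$-actions taking $x^s$ to a nonzero multiple of $x^t$.

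For (a), assume $P\not\cong A_n$. I first record two preliminary facts. \emph{Observation 1:} for every nonzero $q\in P$, $\mh q\neq 0$; otherwise $x^r\mapsto x^r q$ is a $\K_n$-module epimorphism $A_n\to P$ (direct check: $\partial_i(x^r q)=r_i x^r q$), and simplicity of both sides forces $P\cong A_n$. \emph{Observation 2:} any subspace $P'$ with $\mh P\subseteq P'\subseteq P$ is automatically $\S_n$-stable, since $\S_n P\subseteq \mh P\subseteq P'$ by Lemma~\ref{F'(P,0)}. Granting the key claim that $\mh P$ is a simple $\S_n$-module, (a) follows quickly: for any nonzero $\S_n$-submodule $N\subseteq P$ and $0\neq q\in N$, Observation 1 gives $0\neq\mh q\subseteq N\cap\mh P$, whence simplicity of $\mh P$ yields $\mh P\subseteq N$; together with Observation 2 this identifies the nonzero submodules with the subspaces satisfying $\mh P\subseteq P'\subseteq P$ and singles out $\mh P$ as the unique simple submodule. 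Part (c) is then immediate: $\F(P,V(\d_0))$ is simple iff $P$ is the only such $P'$ iff $\mh P = P$, with the $P\cong A_n$ case covered by (b).

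The main obstacle is the key claim. To prove $\mh P$ is $\S_n$-simple, take $0\neq q'\in \mh P$ and show that the associative subalgebra $\A'\subseteq\K_n$ generated by $\S_n$ satisfies $\A' q' = \mh P$. Using $\K_n$-simplicity, $\K_n q' = P$ and hence $\mh P = \mh\K_n q'$; the task becomes realizing every operator $\partial_i y$ (for $y\in\K_n$) modulo $\mathrm{Ann}_{\K_n}(q')$ as an element of $\A'$. The weight-$r$ piece of $\S_n$ is $\{x^r(u|\partial):u\perp r\}$, an $(n-1)$-dimensional subspace of $x^r\mh$ missing the $(r|\partial)$-direction. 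For $n\geq 2$ this missing direction can be accessed via products: splitting $r=r_1+r_2$ with $r_1\perp r$ and taking $u_1=r$ (so $u_1\perp r_1$), one computes $D(r,r_1)D(v,r_2)=x^r(r|\partial)(v|\partial)+(r|r_2)x^r(v|\partial)\in\A'$ for $v\perp r_2$, and subtracting the known term $x^r(v|\partial)\in\S_n$ yields $x^r(r|\partial)(v|\partial)\in\A'$. Cascading such products, combined with the ambient $\mh\subseteq\S_n$ action and $\K_n$-simplicity of $P$ (used to absorb lower-order corrections by rerouting them through separate $\A'$-elements), should fill in all missing directions at every weight and $\partial$-degree. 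The hardest step is the inductive bookkeeping over weights and $\partial$-degrees, likely organized by induction on the $\partial$-degree of $y$.
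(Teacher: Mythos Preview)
Your structural reduction is sound: Observations 1 and 2 are correct, and together they reduce (a) to the simplicity of $\mh P$ as an $\S_n$-module. The treatment of (b) and the deduction of the final statement are also fine. The gap is entirely in the proof of the key claim, which you yourself flag as unfinished.

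Two concrete issues with your sketch. First, in the product computation you assert $x^r(v|\partial)\in\S_n$, but that requires $(v|r)=0$, not just $(v|r_2)=0$; since $r=r_1+r_2$ this forces the extra constraint $(v|r_1)=0$, which you do not impose. Second, the ``cascading such products \ldots\ should fill in all missing directions'' step is where the real content lies, and leaving it as an unspecified induction on $\partial$-degree is precisely the hole. You are proposing to describe the associative subalgebra $\A'\subseteq\K_n$ generated by $\S_n$ well enough to conclude $\mh\K_n\subseteq\A'+\mathrm{Ann}(q')$, and that is harder than necessary.

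The paper avoids this bookkeeping entirely. Starting from any nonzero $p\otimes 1\in N$ (not necessarily with $p\in\mh P$), it computes $D_{j,s-r}D_{i,r}(p\otimes 1)$ as a polynomial in $r$ (note $V(\delta_0)$ is trivial, so only the $\K_n$-side survives), and the coefficient of $-r_i^2$ for $j=i$ is exactly $\partial_{i+1}x^s\partial_{i+1}p$; Vandermonde extraction gives $\partial_{i}x^s\partial_{i}p\in N$ for all $s\in\Z^n$ and all $i$. Now the crucial observation is that $\partial_j\in\S_n$ already, so $\partial_jN\subseteq N$; applying $\partial_j$'s on the left commutes them past $\partial_i$ and pushes them through $x^s$, yielding $\partial_i\, x^s\partial^b\,\partial_i p\in N$ for every monomial $x^s\partial^b\in\K_n$. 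Hence $\partial_i\K_n\partial_i p\subseteq N$, and since $\partial_i p\neq 0$ (Observation~1 plus a one-step $x_i\partial_j$ trick if some $\partial_i p$ vanishes) and $P$ is $\K_n$-simple, $\K_n\partial_i p=P$ and so $\partial_i P\subseteq N$. This is the missing direction $x^r(r|\partial)$ at every weight, obtained in one stroke; no cascade or induction on $\partial$-degree is needed. If you want to salvage your write-up, replace the vague cascading paragraph with exactly this: extract $\partial_i x^s\partial_i\in\A'$ as a Vandermonde combination of products $D_{i,s-r}D_{i,r}$, then left-multiply by $\partial_j\in\S_n$ to get $\partial_i\K_n\partial_i\subseteq\A'$, and invoke $\K_n$-simplicity once.
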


\begin{proof} As before we make the identification $V(\d_0)=\C$.
Take a nonzero $\S_n$-submodule $N\subseteq \F(P,V(\d_0))$ and a nonzero element $p\ot 1\in N$.

For any $i=1,2,\cdots,n-1$, considering the coefficients of $-r_i^2$ and $-r_{i+1}^2$ in \eqref{DDpw} with $w=1$, we get:
$$x^s\p_{i+1}\p_{i+1}p\ot 1+s_{i+1}x^s\p_{i+1}p\ot 1=\p_{i+1}x^s\p_{i+1}p\ot 1\in N, \ \forall\ s\in\Z^n,$$
and
$$x^s\p_i\p_ip\ot 1+s_ix^s\p_ip\ot 1=\p_ix^s\p_ip\ot 1\in N,\ \forall\ s\in\Z^n.$$
We divide the following discussion into three cases.

{\it Case (i): $\p_ip\neq0$ for any $i=1,\cdots,n$.}

In this case, $\p_i\K_n\p_ip\ot 1\subseteq N$, so that $\p_iP\ot 1\subseteq N$ for any $i=1,\cdots,n$.

{\it  Case (ii): $\p_ip=0$ and $\p_jp\neq0$ for some $i\neq j$.}

In this case, $\p_ix_i\p_jp=x_i\p_jp\neq0$. Then applying the previous argument to $x_i\p_jp\ot 1\in N$, we have
$\p_iP\ot 1\subseteq N$. Combining with Case 1 we   obtain that $\p_kP\ot 1\subseteq N$ for all $k=1,\cdots,n$.

{\it  Case (iii): $\p_ip=0$ for any $i=1,\cdots,n$.}

In this case, $P$ is a simple weight $\K_n$-module, and hence isomorphic to the natural module $A_n$. We may assume that $p=1$. The result for this case is quite clear.

Obviously, (a) follows from Case (i) and (ii); (b) follows from Case (iii).
\end{proof}

\begin{remark}\label{L_0} The result in Proposition \ref{k=0} for $\S_n$ is quite different from that for $W_n$.  Actually as a $W_n$-module, $\F(P,V(\d_0))$ is irreducible if and only if  $P\not\cong A_n$ (see Theorem 3.5 (4) in \cite{LLZ}).
\end{remark}

For any $k=1,\cdots,n-1$, denote
$$p\bx w=d_{k-1}(p\ot w)=\sum_{l=1}^n\p_l p\ot e_l\wg w,$$
for any $p\in P$ and $w\in V(\d_{k-1})=\bigwedge^{k-1}\C^n$.
Since $d_{k-1}$ is a $W_n$-module homomorphism, we have
$$D(u,r)(p\bx w)=D(u,r)p\bx w+x^rp\bx (ru^T)w.$$
It is clear that $$\aligned &\L_n(P,k) =\span\{p\bx w\ |\ p\in P, w\in V(\d_{k-1})\},
\\ &\mh\L_n(P,k) =\mh P\bx V(\d_{k-1}).\endaligned$$

Similarly to Lemma \ref{F'(P,0)}, we have
\begin{lemma}\label{L_2'}
$\mh \L_n(P,1)$ is an $\S_n$-submodule of $\L_n(P,1)$ and the quotient module $\L_n(P,1)/\mh \L_n(P,1)$ is trivial.
\end{lemma}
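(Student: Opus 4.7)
The plan is to follow the same strategy as in the proof of Lemma \ref{F'(P,0)}, establishing the stronger inclusion $\S_n\cdot\L_n(P,1)\subseteq\mh\L_n(P,1)$. Both conclusions of the lemma (the submodule property of $\mh\L_n(P,1)$ and the triviality of the induced $\S_n$-action on the quotient) follow immediately from this inclusion. The whole argument is a transfer, via the $W_n$-homomorphism $d_0$, of the argument for $\F(P,V(\d_0))$ from Lemma \ref{F'(P,0)} to its image $\L_n(P,1)$.

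First I would exploit that $V(\d_0)\cong\C$ carries the trivial $\sl_n$-action: for any $u\in\C^n$ and $r\in\Z^n$ with $(u|r)=0$, the matrix $ru^T$ lies in $\sl_n$ (its trace equals $(u|r)=0$) and therefore annihilates $1\in V(\d_0)$. The box-tensor action formula
$$D(u,r)(p\bx 1)=D(u,r)p\bx 1+x^r p\bx (ru^T)\cdot 1,$$
recorded just before the lemma, then collapses to $D(u,r)(p\bx 1)=D(u,r)p\bx 1$.

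Next I would rewrite $D(u,r)p$ using the divergence-zero condition. The commutation $\p_i x^r=x^r\p_i+r_i x^r$ together with $(u|r)=0$ yields the operator identity $D(u,r)=D(u,0)\,x^r=\bigl(\sum_i u_i\p_i\bigr)x^r$, so that $D(u,r)p=D(u,0)(x^r p)\in\mh P$. Combining with the previous step gives
$$D(u,r)(p\bx 1)=D(u,0)(x^r p)\bx 1\in \mh P\bx V(\d_0)=\mh\L_n(P,1),$$
which is exactly the inclusion needed.

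There is no substantive obstacle here: the triviality of $V(\d_0)$ kills the $(ru^T)$-summand, and the divergence-zero condition lets one move $x^r$ past $D(u,0)$ to exhibit $D(u,r)p$ as an element of $\mh P$. The only minor point to verify is that elements of the form $q\bx 1$ with $q\in\mh P$ genuinely sit inside $\L_n(P,1)$, which is immediate from $q\bx 1=\sum_l\p_l q\ot e_l$ being a special case of the spanning expressions $p\bx 1$ for $\L_n(P,1)$.
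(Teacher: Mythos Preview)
Your proposal is correct and follows essentially the same route as the paper: both reduce to the single inclusion $\S_n\L_n(P,1)\subseteq\mh\L_n(P,1)$, kill the $(ru^T)$-term because $V(\d_0)$ is trivial, and use $(u|r)=0$ to rewrite $D(u,r)p=x^rD(u,0)p=D(u,0)x^rp\in\mh P$. The only difference is cosmetic: the paper writes the computation in one line and relies on the already stated identity $\mh\L_n(P,1)=\mh P\bx V(\d_0)$, while you spell out the intermediate justifications.
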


\begin{proof} We only need to show that $\S_n\L_n(P,1)\subseteq\mh \L_n(P,1)$.
For any $p\in P$, $u\in\C^n, r\in\Z^n$ with $(u|r)=0$, we have
$$\aligned
  D(u,r)(p\bx 1)
&=D(u,r)p\bx 1=x^rD(u,0)p\bx 1=D(u,0)x^rp\bx 1,
\endaligned$$
which lies in $\mh \L_n(P,1)$, as desired.
\end{proof}

\begin{remark}\label{L_n' and L_n}
Note that $\mh \L_n(P,1)=\L_n(P,1)$ for all weight $\K_n$-modules $P$.
Indeed, for any weight modules $P$, we can define the weight spaces
$$P_{\l}=\{p\in P\ |\ D(u,0)p=(\l|u)p,\ \forall\ u\in\C^n\},\ \forall\ \l\in\C^n.$$
Note that for $p\in P_\l$, we have $p\bx 1=(\l|u)^{-1}D(u,0)p\bx 1$ for suitable $u\in\C^n$ if $\l\neq0$
and $p\bx 1=\sum_{l=1}^n\p_l p\ot e_l=0$ if $\l=0$.
Hence both $\L_n(P,1)$ and $\mh \L_n(P,1)$ are equal to the space
$$\span\{D(u,0)p\bx w \ |\ u\in\C^n, w\in V(\d_{k-1}), p\in P_\l, \l\in\C^n\setminus\{0\}\}.$$
There are both examples of irreducible $\K_n$-modules $P$ such that  $\mh P=P$ and examples where this is not the case. See Examples in \cite{TZ2}.
\end{remark}

As a consequence of Proposition \ref{k=0}, we have
\begin{proposition} \label{k=1}The following statements hold.
\begin{itemize}\item[(a)]
If $P\not\cong A_n$,   $\L_n(P,1)$ has a unique simple $\S_n$-submodule $\mh \L_n(P,1)$.
\item[(b)]If $P\cong A_n$, then $\L_n(P,1)=\mh \L_n(P,1)\cong A'_n$ as $\S_n$-modules.\end{itemize}
In particular, $\L_n(P,1)$ is simple if and only if $\mh P=P$ or $P=A_n$.
\end{proposition}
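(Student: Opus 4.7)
The plan is to deduce this proposition from Proposition \ref{k=0} via the $\S_n$-module homomorphism $d_0 : \F(P,V(\d_0)) \to \F(P,V(\d_1))$, whose image is, by construction, $\L_n(P,1)$. The first step is to compute $\ker d_0 = P^{\mh}\otimes V(\d_0)$, where $P^{\mh}=\{p\in P\mid \p_i p = 0 \text{ for all } i\}$. The key observation here is that for a simple $\K_n$-module $P$ one has $P^{\mh}\neq 0$ if and only if $P\cong A_n$: a nonzero $p\in P^{\mh}$ generates $P$ by simplicity, giving a surjection $\K_n/\K_n\mh \twoheadrightarrow P$; since $\K_n/\K_n\mh \cong A_n$ is already simple this surjection is an isomorphism, and conversely $1\in A_n^{\mh}$.

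For part (a), when $P\not\cong A_n$, the step above yields that $d_0$ is an $\S_n$-module isomorphism from $\F(P,V(\d_0))$ onto $\L_n(P,1)$. Using $\p_j = D(e_j,0)$ and the commutativity $\p_i\p_j = \p_j\p_i$ in $\K_n$, I would verify that $\p_j(p \bx 1) = (\p_j p) \bx 1$, so $d_0$ carries the unique simple submodule $\mh P\otimes V(\d_0)$ of $\F(P,V(\d_0))$ identified in Proposition \ref{k=0}(a) exactly onto $\mh\L_n(P,1)$. The entire submodule description of Proposition \ref{k=0}(a) then transfers verbatim through $d_0$, establishing (a).

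For part (b), when $P\cong A_n$, the $\S_n$-action on $\F(A_n,V(\d_0))$ reduces to the natural action on $A_n$, and $\ker d_0 = \C\cdot(1\otimes 1)$ matches the trivial summand $\C$ in the decomposition $A_n = A'_n\oplus\C$ from Proposition \ref{k=0}(b). Hence $\L_n(A_n,1)\cong A_n/\C\cong A'_n$, which is a simple $\S_n$-module, while $\mh\L_n(A_n,1)=\L_n(A_n,1)$ is immediate from Remark \ref{L_n' and L_n}. The simplicity criterion then combines the two cases: in (a), simplicity is equivalent to the unique simple submodule filling the module, i.e., to $\mh P = P$; in (b), simplicity always holds. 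Together this yields simple iff $\mh P = P$ or $P\cong A_n$.

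The only non-formal step is identifying $\ker d_0$ and proving that $P^{\mh}\neq 0$ forces $P\cong A_n$; everything else is a routine transfer of structure from Proposition \ref{k=0} through the isomorphism $d_0$, which is precisely why the authors present this result as a consequence of Proposition \ref{k=0}.
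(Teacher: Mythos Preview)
Your proposal is correct and follows essentially the same approach as the paper: both identify $\ker d_0$ with the $\mh$-invariants of $P$, argue that this is nonzero precisely when $P\cong A_n$, and then transfer the submodule structure of $\F(P,V(\d_0))$ from Proposition~\ref{k=0} through $d_0$. Your write-up is in fact slightly more explicit than the paper's in two places---you spell out why $P^{\mh}\neq 0$ forces $P\cong A_n$ via the quotient $\K_n/\K_n\mh\cong A_n$, and you verify directly that $d_0$ intertwines the $\mh$-action so that $\mh\F(P,V(\d_0))$ maps onto $\mh\L_n(P,1)$---whereas the paper simply asserts these facts.
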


\begin{proof}
We note that $\ker d_0=\{p\ |\ \p_ip=0\ \forall\ i=1,\cdots,n\}$, which is nonzero if and only if
$P$ is a simple weight $\K_n$-module with all weights in $\Z^n$, that is, if and only if $P\cong A_n$.
If $P\not\cong A_n$, then $d_0$ is an isomorphism, and the statement (a) follows immediately from Proposition \ref{k=0} (a).
If $P\cong A_n$, then $\ker d_0=\C$. Then it follows from Proposition \ref{k=0} (b) and Remark \ref{L_n' and L_n} that $\mh \L_n(P,1)=\L_n(P,1)\cong A'_n$ as $\S_n$-modules.
\end{proof}

\begin{remark}\label{L_1} The result in Proposition  \ref{k=1} for $\S_n$ is quite different from that for $W_n$.  Actually, as a $W_n$-module, $\L_n(P,1)$ is always irreducible  (see Theorem 3.5 (3) in \cite{LLZ}).
\end{remark}

For $s\in\Z^n, 1\le i\le n-1, 1\le k\le n$,
define the linear map   $g_{i,s}$ on $\F (P, V(\delta_{k}))$ as follows
$$\aligned g_{i,s}(p\ot w)=&x^s\p_{i+1}p\ot E_{i,i+2}w-x^s\p_{i+2}p\ot E_{i,i+1}w\\
&+\sum_{l=1}^nx^s\p_lp\ot E_{l,i+2}E_{i,i+1}w,\ \forall\ p\in P, w\in V(\delta_{k}) .\endaligned$$
We have
\begin{lemma} \label{linear maps f, g} Let $1\le i\le n-1, s\in\Z^n$. Then the restriction of the linear map $ g_{i,s}$ on $\L_n(P,k)$ is identically zero for $1\le k\le n$.
\end{lemma}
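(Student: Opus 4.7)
The plan is to verify directly that $g_{i,s}$ annihilates every generator $p\boxtimes w=\sum_{j=1}^n \partial_j p\otimes e_j\wedge w$ of $\L_n(P,k)$, where $p\in P$ and $w\in V(\delta_{k-1})$. I would apply the definition of $g_{i,s}$ to each summand $\partial_j p\otimes e_j\wedge w$ and then expand every factor of the form $E_{a,b}(e_j\wedge w)$ by the Leibniz rule $E_{a,b}(e_j\wedge w)=\delta_{j,b}\,e_a\wedge w+e_j\wedge E_{a,b}w$ (applied twice for the composite $E_{l,i+2}E_{i,i+1}$). After collecting, this produces seven separate summations indexed by which Kronecker deltas have collapsed.

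Two of these summations cancel immediately because $\partial_{i+1}\partial_{i+2}=\partial_{i+2}\partial_{i+1}$, and two more cancel after relabeling the dummy index because $\partial_l\partial_{i+2}=\partial_{i+2}\partial_l$. The remaining three summations I would handle using the key identity
\begin{equation*}
e_a\wedge E_{c,b}w+e_c\wedge E_{a,b}w=0,\qquad \forall\,a,b,c\in\{1,\dots,n\},\ w\in V(\delta_r),
\end{equation*}
which is proved by a direct sign count on wedge monomials $w=e_{k_1}\wedge\cdots\wedge e_{k_r}$: both sides vanish unless $b=k_{m^{*}}$ for some $m^{*}$, in which case the two resulting length-$(r+2)$ wedges differ only by a transposition of $e_a$ with $e_c$ and so cancel in sign (the degenerate case $a=c$ is immediate since then each summand contains $e_a$ twice).

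Applying this identity with $(a,b,c)=(j,i+2,i)$ cancels two of the three remaining summations (the pair involving $e_j\wedge E_{i,i+2}w$ and $e_i\wedge E_{l,i+2}w$) after invoking $\partial_j\partial_{i+1}=\partial_{i+1}\partial_j$. For the last summation $\sum_{l,j}x^s\partial_l\partial_j p\otimes e_j\wedge E_{l,i+2}E_{i,i+1}w$, I would symmetrize in $(l,j)$ using commutativity of partial derivatives and then apply the identity with $w$ replaced by $E_{i,i+1}w$ and $(a,b,c)=(j,i+2,l)$, which kills the symmetrized form term by term. The main obstacle is purely bookkeeping, since there are seven intermediate sums and one must track signs and index positions carefully; the key identity itself is a one-line argument once set up, and no step requires input deeper than the wedge Leibniz rule and the commutativity of the $\partial_i$.
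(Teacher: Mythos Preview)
Your proof is correct and follows the same overall strategy as the paper: apply $g_{i,s}$ to a generic generator of $\L_n(P,k)$ and verify directly that the result vanishes. The organization differs, however. The paper fixes a basis monomial $w=e_{j_1}\wedge\cdots\wedge e_{j_k}$, writes the generator as $\sum_l \partial_l p\otimes E_{l,j_1}w$, and then groups all resulting terms by their $P$-factor $x^s\partial_a\partial_b p$, checking seven separate $V(\delta_k)$-identities case by case on that fixed monomial. You instead expand on the $V$-side via the Leibniz rule and distill the nontrivial cancellations into a single wedge identity $e_a\wedge E_{c,b}w+e_c\wedge E_{a,b}w=0$, applied uniformly. (Incidentally, that identity becomes transparent if one writes $w=e_b\wedge w_1+w_2$ with $w_1,w_2$ free of $e_b$: then $E_{c,b}w=e_c\wedge w_1$ and $E_{a,b}w=e_a\wedge w_1$, so the sum is $e_a\wedge e_c\wedge w_1+e_c\wedge e_a\wedge w_1=0$.) Your route avoids choosing a basis vector and packages the wedge-algebra cancellation more conceptually; the paper's route is more hands-on but requires tracking more individual cases. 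Both are straightforward and neither needs any input beyond commutativity of the $\partial_i$ and elementary wedge algebra.
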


\begin{proof} Take any $\sum_{l=1}^n\p_lp\ot E_{lj}w\in \L_n(P,k)$ with $w=e_{j_1}\wedge\cdots\wedge e_{j_k}$ for some distinct $1\leq j_1=j,j_2,\cdots,j_k\leq n$.
We have the following  calculations:\begin{equation}\label{equation for g}
\begin{aligned}
&g_{i,s}(\sum_{l=1}^n\p_lp\ot E_{lj}w)\\
=&\sum_{l=1}^n\left(x^s\p_{i+1}\p_lp\ot E_{i,i+2}E_{lj}w-x^s\p_{i+2}\p_lp\ot E_{i,i+1}E_{lj}w\right)\\
&+\sum_{l=1}^n\sum_{l'=1}^nx^s\p_{l'}\p_lp\ot E_{l',i+2}E_{i,i+1}E_{l,j}w.
\end{aligned}
\end{equation}
The term involving $x^s\p_{i+1}^2p$ in (\ref{equation for g}) is
$$x^s\p_{i+1}^2p\ot (E_{i,i+2}E_{i+1,j}+E_{i+1,i+2}E_{i,i+1}E_{i+1,j})w=0.$$
The term involving $x^s\p_{i+2}^2p$ in (\ref{equation for g}) is
$$x^s\p_{i+2}^2p\ot (E_{i+2,i+2}E_{i,i+1}E_{i+2,j}-E_{i,i+1}E_{i+2,j})w=0.$$
The term involving $x^s\p_{i+1}\p_{i+2}p$ in (\ref{equation for g}) is
\begin{eqnarray*}
x^s\p_{i+1}\p_{i+2}p\ot (E_{i,i+2}E_{i+2,j}-E_{i,i+1}E_{i+1,j}+E_{i+1,i+2}E_{i,i+1}E_{i+2,j}\\
+E_{i+2,i+2}E_{i,i+1}E_{i+1,j})w=0.\hspace{6cm}
\end{eqnarray*}
The term involving $x^s\p_l\p_{i+1} p$ in (\ref{equation for g}) for $ l\neq i+1, i+2$ in  is
$$x^s\p_l\p_{i+1} p\ot (E_{i,i+2}E_{lj}+E_{i+1,i+2}E_{i,i+1}E_{l,j}+E_{l,i+2}E_{i,i+1}E_{i+1,j})w=0.$$
The term involving$x^s\p_l\p_{i+2} p$ in (\ref{equation for g}) for $ l\neq i+1, i+2$  is
$$x^s\p_l\p_{i+2} p\ot (-E_{i,i+1}E_{lj}+E_{i+2,i+2}E_{i,i+1}E_{l,j}+E_{l,i+2}E_{i,i+1}E_{i+2,j})w=0.$$
The term involving $x^s\p^2_l p$ in (\ref{equation for g}) for $ l\neq i+1, i+2$   is
$$x^s\p^2_l p\ot E_{l, i+2}E_{i, i+1}E_{l,j}w=0.$$
The term involving$x^s\p_l\p_{l'} p$ in (\ref{equation for g}) for $ l\neq i+1, i+2, l^{\prime}\neq i+1, i+2$  is
$$x^s\p_l\p_{l'} p\ot (E_{l^{\prime},i+2}E_{i,i+1}E_{lj}+E_{l,i+2}E_{i,i+1}E_{l^{\prime},j})w=0.$$
Hence the RHS of (\ref{equation for g}) is zero, i.e., the restriction of
the linear map $g_{i,s}$ on $\L_n(P,k)$ is 0.
\end{proof}

The following result asserts the simplicity of $\L_n(P,k)$ as an $\S_n$-module for any $n\geq 3$ and $2\leq k\leq n-1$.
\begin{proposition}\label{Ln}
Let $n\geq 3$ and $2\leq k\leq n-1$. Then $\L_n(P,k)$ is a simple $\S_n$-submodule of $\F(P, V(\delta_k))$ for any simple $\K_n$-module $P$. Consequently,
$\tL_n(P,k)$ is a maximal $\S_n$-submodule of $\F(P,V(\d_k))$ for  any simple $\K_n$-module $P$.
\end{proposition}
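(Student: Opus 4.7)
The plan is to mimic the proof of Theorem~\ref{irre}, with modifications dictated by the minuscule setting. Let $N$ be a nonzero $\S_n$-submodule of $\L_n(P,k)$; the goal is to show $N=\L_n(P,k)$. Since $D(u,r)(p\bx w)=D(u,r)p\bx w + x^r p\bx (ru^T)w$, the $\S_n$-action on $\L_n(P,k)$ preserves the $\bx$-form and formally mirrors the action on $\F(P,V(\d_{k-1}))$ with $V(\d_{k-1})$ as the internal $\sl_n$-module; the target therefore reduces to showing that $p\bx w\in N$ for every $p\in P$ and $w\in V(\d_{k-1})$.

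Pick a nonzero $\eta\in N$ and in $\F(P,V(\d_k))$ write $\eta=\sum_{i\in I}q_i\ot v_i$ with $\{v_i\}$ linearly independent and each $q_i\neq 0$. Apply $D_{j,s-r}D_{i,r}$ to $\eta$ as in (\ref{DDpw}), view the result as a polynomial in $r\in\Z^n$ with coefficients in $\F(P,V(\d_k))$, and extract the degree-$4$ coefficients via the Vandermonde device of Theorem~\ref{irre}. The essential new difficulty is that Claim~2 of Theorem~\ref{irre} relied on some $E_{t',t}^2 v_i\neq 0$, which is unavailable here because $V(\d_k)\cong\bigwedge^k\C^n$ is minuscule and $E_{t',t}^2$ acts as zero. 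The replacement is Lemma~\ref{linear maps f, g}: since every element of $N$ lies in $\L_n(P,k)$, each $g_{i,s}$ vanishes on it, yielding identities that kill precisely the contributions that would otherwise disappear by minusculeness. Combining the Vandermonde-extracted terms with these $g_{i,s}$-identities should produce $\sum_i x^s q_i\ot v_i'\in N$ for every $s\in\Z^n$, where $v_i'$ is obtained from $v_i$ by a suitable $\sl_n$-modification. Applying $\p_l\in\S_n$ then gives $\sum_i \p_l q_i\ot v_i'\in N$, and iterating yields $\sum_i y q_i\ot v_i'\in N$ for every $y\in\K_n$. Theorem~\ref{density} isolates a single summand $q\ot v'\in N\setminus\{0\}$; because $N\subseteq\L_n(P,k)$ this element has the form $p_0\bx w_0$ (or a sum of such), and the simplicity of $P$ over $\K_n$ together with the simplicity of $V(\d_{k-1})$ over $\sl_n$ sweep $N$ out to all of $\L_n(P,k)$.

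The main obstacle is the combinatorial bookkeeping in the second step: minusculeness of $V(\d_k)$ kills many naively expected terms, so one must check that after imposing the $g_{i,s}$-relations a nonzero contribution still survives and that the surviving $v_i'$ can be arranged to be nonzero for a judicious choice of indices $i,j$ and Vandermonde coefficient. Once simplicity of $\L_n(P,k)$ is proved, the maximality of $\tL_n(P,k)$ follows immediately: by Lemma~\ref{known LLZ}(a) the map $d_k$ induces an $\S_n$-module isomorphism $\F(P,V(\d_k))/\tL_n(P,k)\cong \L_n(P,k+1)$, and simplicity of the target (either by the present proposition when $k+1\leq n-1$, or by a short direct argument in the one-dimensional edge case $k+1=n$) forces $\tL_n(P,k)$ to be maximal.
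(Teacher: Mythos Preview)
Your proposal has the right key ingredient (the vanishing of $g_{i,s}$ on $\L_n(P,k)$ from Lemma~\ref{linear maps f, g}) but the mechanism you describe for using it does not actually work. You propose to extract the degree-$4$ part of \eqref{DDpw} and then cancel unwanted contributions using the $g_{i,s}$-identities; however, the degree-$4$ terms are of the shape $x^sp\otimes E_{a,b}E_{c,d}w$ (no derivatives on $p$), whereas $g_{i,s}(p\otimes w)$ is built from $x^s\partial_\bullet p\otimes(\cdots)$. These two families do not interact, so the $g_{i,s}$-relations cannot repair the vanishing caused by minusculeness at degree $4$. Moreover, the intermediate target you aim for, namely $\sum_i x^s q_i\otimes v_i'\in N$ and then a single pure tensor $q\otimes v'\in N$, is the wrong shape: elements of $\L_n(P,k)$ are $\boxtimes$-expressions $\sum_l\partial_l p\otimes e_l\wedge w$, and there is no reason a nonzero pure tensor should lie in $N$ (or, if it did, that it would let you sweep out all of $P\boxtimes V(\delta_{k-1})$).

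What the paper does instead is to specialize $j=i+1$ in \eqref{DDpw} and extract the coefficient of $r_i^2$ (degree $2$, not $4$). That coefficient turns out to be exactly
\[
s_{i+1}\,g_{i,s}(p\otimes w)\;-\;s_{i+1}\sum_{l=1}^n\partial_l(x^sp)\otimes E_{l,i+2}E_{i,i+1}w,
\]
so once $g_{i,s}$ vanishes on $\L_n(P,k)$ the surviving piece is already of $\boxtimes$-type. To guarantee this survivor is nonzero, the paper first applies a weight-\emph{lowering} preprocessing (acting by suitable $D_{d,r}$) to force some component of the $w_j$ to sit at the lowest weight $e_{n-k+1}\wedge\cdots\wedge e_n$; then $E_{l,n-k+2}E_{n-k,n-k+1}$ applied to that component is nonzero. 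Your proposal has no analogue of this step. Finally, the sweeping-out is more delicate than an appeal to simplicity of $P$ and $V(\delta_{k-1})$: the density argument only produces $(x^{s+s'}p)\boxtimes w\in N$ for $s$ in a half-space $s_{n-k+1}\ge 0$ (because of the $s_{i+1}\neq 0$ constraint), and one must then show that the set $V'\subseteq V(\delta_{k-1})$ of $w$'s with this property is $\sl_n$-stable, using the $\boxtimes$-compatibility of the $\S_n$-action and a small scaling trick to separate the two summands of $D(u,r)(p\boxtimes w)$. Your outline skips all three of these points; as written it does not go through.
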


\begin{proof} Let   $r,s\in\Z^n$, $p\ot w\in\F(P,V(\d_k))$, $1\le i, j\le n-1$.
Taking $j=i+1$ in \eqref{DDpw}, the coefficient of $-r_i^2$ is
\begin{equation}\label{r_i^2}\aligned
& s_{i+1}\Big(x^s\p_{i+1}p\ot E_{i,i+2}w-x^s\p_{i+2}p\ot E_{i,i+1}w\\
&+\sum_{l=1}^nx^s\p_lp\ot E_{l,i+2}E_{i,i+1}w-\sum_{l=1}^n\p_l x^sp\ot E_{l,i+2}E_{i,i+1}w\Big)\\
=&s_{i+1}g_{i,s}(p\ot w)-s_{i+1}\sum_{l=1}^n\p_l x^sp\ot E_{l,i+2}E_{i,i+1}w.
\endaligned\end{equation}

Suppose that $N$ is a nonzero $\S_n$-submodule of $\L_n(P,k)$.
Take any nonzero $y=\sum_{j\in J}p_j\ot w_j\in N$, where $J$ is a finite index set, all $w_j\in V(\d_k), j\in J$, are nonzero and all $p_j\in P, j\in J$, are linearly independent.
Let $v$ be a nonzero weight component which has minimal weight among all homogeneous components of all $w_j, j\in J$.

\smallskip
\noindent{\bf Claim 1:} We can choose $y$ so that $v\in\C e_{n-k+1}\wg\cdots\wg e_n$.
\smallskip

If $v\notin\C e_{n-k+1}\wg\cdots\wg e_n$, i.e., the weight of $v$ is not $\l_k=\d_n-\d_{n-k}$, the lowest weight of $V(\d_k)$, then there exists $1\leq d\leq n-1$ such that $E_{d+1,d}v$ is nonzero, and has lower weight. Now we consider the equation \eqref{Djk} with $j, k$ therein replaced by $d, j$ respectively:
\begin{equation}\label{Ddj}
\begin{aligned}
&D_{d,r}\sum_{j\in J}p_j\ot w_j\\
%=&\sum_{j\in J}\big(D_{d,r}p_j\ot w_j+x^{r}p_j\ot\sum_{l=1}^nE_{l,d,r}w_j\big)\\
=&\sum_{j\in J}\big(D_{d,r}p_j\ot w_j+x^{r}p_j\ot\sum_{l=1}^n(r_lr_{d+1}E_{l,d}-r_lr_{d}E_{l,d+1})w_j\big).\\
\end{aligned}
\end{equation}
Taking suitable $r\in\Z^n$ such that $r_{d+1}\gg r_{d'}$ for any $d'\neq d+1$, we see that there exists some $j\in J$ such that $r_{d+1}^2E_{d+1,d}v$ is a nonzero weight component of $\sum_{l=1}^nE_{l,d,r}w_j$ with weight lower than that of $v$, and $x^rp_j\otimes r_{d+1}^2E_{d+1,d}v$ can not be canceled by other summands.
Replacing $\sum_{k\in I}p_k\ot w_k$ with $D_{j,r}\sum_{k\in I}p_k\ot w_k\neq0$, and repeating this process several times, we may assume that the weight of $v$ is $\l_k$. That is, $v\in\C e_{n-k+1}\wg\cdots\wg e_n$,  Claim 1 follows.

\smallskip

Assume that $v$ is a nonzero weight component of some $w_{j_0}, j_0\in J$.
Since $n\geq 3$ and $2\leq k\leq n-1$, we have $E_{l,n-k+2}E_{n-k,n-k+1}v\neq0$ and $E_{l,n-k+2}E_{n-k,n-k+1}w_{j_0}\neq0$
for some $l$ with $1\leq l\leq n$.

Now by applying the action of $D_{i+1,s-r}D_{i,r}$ on $y=\sum_{j\in J}p_j\ot w_j$ and using \eqref{r_i^2}, we   obtain that
$$s_{i+1}g_{i,s}(y)-s_{i+1}\sum_{j\in J}\sum_{l=1}^n\p_l x^sp_j\ot E_{l,i+2}E_{i,i+1}w_j\in N.$$
Since $y\in N\subseteq \L_n(P,k)$, it follows from Lemma \ref{linear maps f, g} that
$$s_{i+1}\sum_{j\in J}\sum_{l=1}^n\p_l x^sp_j\ot E_{l,i+2}E_{i,i+1}w_j\in N,$$
i.e.,
\begin{equation}\label{elements in N}
\sum_{j\in J}\sum_{l=1}^n\p_l x^sp_j\ot E_{l,i+2}E_{i,i+1}w_j\in N,\forall s\in\Z^n {\text{ with}}  \ s_{i+1}\neq0.
\end{equation}

\smallskip
\noindent{\bf Claim 2:} For any nonzero $p\in P$, there is $s'\in\Z^n$  (depending on $p$) and $w\in V(\delta_{k-1})$ (independent of $p$) such that
$$  (x^{s+s'}p)\boxtimes w \in N\setminus\{0\},\ \forall\ s\in\Z^n {\text{ with }}  s_{n-k+1}\geq 0.$$
\smallskip

Note that all $p_j, j\in I$, are linearly independent.
By Theorem \ref{density},  we can find some elements $z\in\K_n$ such that
$zp_{j_0}=p$ and $zp_{j}=0$ for all $j\neq j_0$. Now take $s'\in\Z^n$ such that
$z'=x^{s'}z\in\sum_{r'_{i+1}>0}\C[\p_1,\cdots,\p_n]x^{r'}$.  Then we have
$z'p_{j_0}=x^{s'}p\ne0$ and $z'p_{j}=0$ for all $j\neq j_0$.
It follows from (\ref{elements in N}) that
$$\sum_{l=1}^n\p_l x^sz'p_{j_0}\ot E_{l,i+2}E_{i,i+1}w_{j_0}\in N,\ \forall\ s\in\Z^n {\text{ with }}   s_{i+1}\geq 0.$$
Taking $i=n-k$, we get
$$
\sum_{l=1}^n\p_l x^{s+s'}p\ot E_{l,n-k+2}E_{n-k,n-k+1}w_{j_0}\in N,\ \forall\ s\in\Z^n {\text{ with }}   s_{n-k+1}\geq 0.
$$
Claim 2 follows.

\smallskip

Let $V'$ be the subset of $V(\d_{k-1})$ consisting of all elements $w$ that satisfies the following conditions:
for any $p\in P$, there exists $s'\in\Z^n$ such that $(x^{s+s'}p)\bx w\in N$ for all $s\in\Z^n$ with $s_{n-k+1}\geq0$.
From Claim 2 we see that $V'\neq0$ is a subspace of $V(\d_{k-1})$.

Take any $w\in V', p\in P$ and $s'\in\Z^n$ such that $x^{s+s'}p\bx w\in N$ for all $s\in\Z^n$ with $s_{n-k+1}\geq0$.
Then for any $u\in\C^n, r\in\Z^n$ with $(u|r)=0$, for any  $s\in\Z^n$ (depending on $p$, $w$ and $r$) satisfying $s_{n-k+1}\geq0$ and $s_{n-k+1}+r_{n-k+1}\geq0$, we have
\begin{equation}\label{V'}\aligned
&D(u,r)(x^{s+s'}p\bx w)\\
=&(x^rD(u,0)x^{s+s'}p)\bx w+(x^{r+s+s'}p)\bx (ru^T)w\\
=&D(u,0)(x^{s+s'+r}p\bx w)+(x^{s+s'+r}p)\bx (ru^T)w\in N,
\endaligned\end{equation}
which implies $(x^{s+s'+r}p)\bx (ru^T)w\in N$, that is, $(ru^T)w\in V'$.
Hence $V'$ is a nonzero $\sl_n$-submodule of $V(\d_{k-1})$, forcing $V'=V(\d_{k-1})$.

Take $s\in\Z^n$ such that $s_{n-k+1}\geq0$ and $s_{n-k+1}+s'_{n-k+1}\geq0$.
Take $r=-(s+s')$ in \eqref{V'} and $u\in\C^n$ with $(u|r)=0$, we get
$$D(u,0)(p\bx w)+p\bx (ru^T)w\in N.$$
Replacing $r$ with $2r$ and $u$ with $2u$ in above formula (all the requirements are still satisfied), we get
$$2D(u,0)(p\bx w)+4p\bx (ru^T)w\in N.$$
Combining these two formulas, we obtain that $p\bx (ru^T)w\in N$.
Note that $s'$ is fixed and $r=-(s+s')$ only need to satisfy the conditions
$s_{n-k+1}\geq0$ and $s_{n-k+1}+s'_{n-k+1}\geq0$. The elements of the form $ru^T$ with $(u|r)=0$ can span the algebra $\sl_n$.
We obtain that $P\bx V(\d_{k-1})\subseteq N$. Hence, $\L_n(P,k)$ is a simple $\S_n$-module, as desired.

The second statement follows directly from the first one.
\end{proof}

Now we can summarize the results in this section into the following theorem.

\begin{theorem}\label{LLn} Let $P$ be a simple $\K_n$-module. Then the following statements hold.
\begin{itemize}
\item[(a)] The $\S_n$-module $\F(P,V(\d_0))$ is irreducible if and only if $\mh P=P$.
\item[(b)] The $\S_n$-module $\F(P, V(\delta_k))$ is not irreducible for $1\leq k\leq n-1$.
\item[(c)] If $P\not\cong A_n$, then $\F(P,V(\d_0))$ has a unique simple $\S_n$-submodule
$\mh\F(P,V(\d_0))$, and any nonzero submodule is of the form $P'\ot V(\d_0)$, where $P'$ is a subspace with
$\mh P\subseteq P'\subseteq P$. If $P\cong A_n$, then $\F(P,V(\d_0))\cong A'_{n}\oplus\C$.
\item[(d)] If $P\not\cong A_n$, $\L_n(P,1)$ has a unique simple $\S_n$-submodule $\mh \L_n(P,1)$. If $P\cong A_n$, then $\L_n(P,1)=\mh \L_n(P,1)\cong A'_n$
as $\S_n$-modules. In particular, $\L_n(P,1)$ is simple if and only if $\mh P=P$ or $P=A_n$.
\item[(e)] For  $n\geq 3$ and $2\leq k\leq n-1$,  the $\S_n$-module $\L_n(P,k)$ is simple for any simple $\K_n$-module $P$. Consequently,
$\tL_n(P,k)$ is a maximal $\S_n$-submodule of $\F(P,V(\d_k))$.
\end{itemize}
\end{theorem}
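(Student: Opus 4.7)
The plan is to observe that Theorem \ref{LLn} is essentially a summary assembling the propositions and lemmas already established in Section 4, so the proof reduces to collating those results with pointers to exactly where each item was proved. In order to make this visible, I would proceed item by item and, where appropriate, indicate the single extra line of reasoning (if any) that glues the cited result to the statement.

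For part (b), this is literally the content of Lemma \ref{known LLZ}(c), which was deduced from the known $W_n$-results of \cite{LLZ} via the natural equivalence between the complexes $\{d_k\}$ and $\{\pi_k\}$ established in Lemma \ref{natural equiv}; nothing more is required. Parts (c) and (d) are restatements of Propositions \ref{k=0} and \ref{k=1} respectively, which rely on the case analysis (cases (i)--(iii) in the proof of Proposition \ref{k=0}) according to whether $\p_i p$ vanishes for some $i$, and on the interplay between $\ker d_0$ and the natural $\K_n$-module $A_n$. Part (a) is the ``in particular'' clause of Proposition \ref{k=0}: if $\mh P = P$ then $\F(P,V(\d_0)) = \mh \F(P,V(\d_0))$ is simple by (c), while if $\mh P \subsetneq P$ then (c) exhibits a proper submodule (taking $P'$ strictly between $\mh P$ and $P$), and if $P \cong A_n$ the decomposition $A'_n \oplus \C$ is manifestly reducible. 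Part (e) is Proposition \ref{Ln}, whose proof already contains the claim about $\tL_n(P,k)$: since $\L_n(P,k)$ is simple and equals the image of $d_{k-1}$, while $\tL_n(P,k) = \ker d_k$ by Lemma \ref{known LLZ}(a), the quotient $\F(P,V(\d_k))/\tL_n(P,k) \cong \L_n(P,k+1)$ is simple by Proposition \ref{Ln} (for $k+1 \le n-1$) or by analysis in degree $n$, so $\tL_n(P,k)$ is maximal.

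Since every ingredient is in place, there is no genuine obstacle; the only subtlety is ensuring that the ``maximal'' assertion in (e) is properly derived. For the top degree $k = n-1$, one needs to know that $\F(P,V(\d_n))/\L_n(P,n)$ behaves well, which can be reduced to Proposition \ref{k=0} applied in degree $n$ after identifying $V(\d_n)$ with the trivial $\sl_n$-module (so that $\F(P,V(\d_n)) \cong \F(P,V(\d_0))$ up to a twist). I would present this as the one clarifying remark inside an otherwise pure collation, and then close the proof.
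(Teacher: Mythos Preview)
Your proposal is correct and matches the paper exactly: the theorem is stated there as a summary of Section~4 with no separate proof, and you have correctly mapped each item to Lemma~\ref{known LLZ}(c) and Propositions~\ref{k=0}, \ref{k=1}, \ref{Ln}. One small slip in your last paragraph: for $k=n-1$ the object that must be shown simple is $\L_n(P,n)$ itself (this is the quotient $\F(P,V(\d_{n-1}))/\tL_n(P,n-1)$), not $\F(P,V(\d_n))/\L_n(P,n)$; your reduction to Proposition~\ref{k=0} via the $\sl_n$-isomorphism $V(\d_n)\cong V(\d_0)$ is nonetheless the right way to handle it, since under that identification $\L_n(P,n)$ corresponds to $\mh\F(P,V(\d_0))$.
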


\begin{center}
\bf Acknowledgments
\end{center}

The research was carried out during the visit of the first two authors to Wilfrid
Laurier University in the summer of 2017. The hospitality of Wilfrid Laurier University is gratefully acknowledged.

Brendan Frisk Dubsky is partially supported by the Lundstr\"om-\AA{}man foundation. 

Xianqian Guo is partially supported by NSF of China (Grant No. 11471294) and the Outstanding Young Talent Research Fund.

Yufeng Yao is partially supported by NSF of China (Grant Nos. 11771279, 11571008 and 11671138) and NSF of Shanghai (Grant No. 16ZR1415000).

Kaiming Zhao is partially supported by NSF of China (Grant No. 11271109) and NSERC.

\
\

\noindent Brendan Frisk Dubsky: Department of Mathematics, Uppsala University, Box 480, SE-75106, Uppsala,
Sweden. E-mail: brendan.frisk.dubsky@math.uu.se \vspace{1mm}

\noindent Xianqian Guo: School of Mathematics and Statistics, Zhengzhou University,
Zhengzhou, 730000 P. R. China. Email: guoxq@zzu.edu.cn \vspace{1mm}

\noindent Yufeng Yao: Department of Mathematics, Shanghai Maritime University,
Shanghai, 201306 P. R. China. Email: yfyao@shmtu.edu.cn \vspace{1mm}

\noindent Kaiming Zhao: College of Mathematics and Information Science, Hebei Normal (Teachers)
University, Shijiazhuang, Hebei, 050016 P. R. China, and Department of Mathematics, Wilfrid
Laurier University, Waterloo, ON, Canada N2L 3C5. Email: kzhao@wlu.ca

\end{document}